\theoremstyle{plain}
\newtheorem{thm}{Theorem}
\newtheorem{prop}[thm]{Proposition}
\newtheorem{cor}[thm]{Corollary}
\newtheorem{lem}[thm]{Lemma}
\newtheorem{conj}[thm]{Conjecture}
\newcommand\nt@name{Theorem}
\newtheorem*{nt@thm}{\nt@name}
\newenvironment{namedthm}[1]
{\renewcommand\nt@name{#1}
  \begin{nt@thm}}
  {\end{nt@thm}}
\theoremstyle{definition}
\newtheorem{defn}[thm]{Definition}
\newtheorem{nota}[thm]{Notation}
\theoremstyle{remark}
\newtheorem{rem}[thm]{Remark}
\numberwithin{thm}{section}
\newcommand{\ot}{\mathrm{ot}}
\newcommand{\vj}{v_J}
\newcommand{\on}{\mathbf{On}}
\newcommand{\crit}{\mathrm{crit}}
\title{Factorisation of germ-like series}
\author{Sonia L'Innocente}
\address[S.\ L'Innocente]{School of Science and Technology, Mathematics Division, University of Camerino, Via Madonna delle Carceri 9, I-62032 Camerino, Italy}
\email{sonia.linnocente@unicam.it}
\author{Vincenzo Mantova}
\address[V.\ Mantova]{Scuola Normale Superiore, Faculty of Sciences, Piazza dei Cavalieri 7, I-56126 Pisa, Italy}
\curraddr{School of Mathematics, University of Leeds, Leeds LS2 9JT, United Kingdom}
\email{v.l.mantova@leeds.ac.uk}
\thanks{Both authors gratefully acknowledge the support of the project
  FIRB2010 ``New advances in the Model theory of exponentiation''
  RBFR10V792, of which the first author is the principal
  investigator. The second author was also supported by the ERC-AdG
  ``Diophantine Problems'' 267273.}
\date{30th January 2017}
\subjclass[2010]{Primary 13F25, secondary 03E10, 12J15, 06F25}
\keywords{Generalized power series, unique factorisation, order-value}
\begin{document}

\begin{abstract}
  A classical tool in the study of real closed fields are the fields
  $K((G))$ of generalised power series (i.e., formal sums with
  well-ordered support) with coefficients in a field $K$ of
  characteristic 0 and exponents in an ordered abelian group $G$. A
  fundamental result of Berarducci ensures the existence of
  irreducible series in the subring $K((G^{\leq 0}))$ of $K((G))$
  consisting of the generalised power series with non-positive
  exponents.

  It is an open question whether the factorisations of a series in
  such subring have common refinements, and whether the factorisation
  becomes unique after taking the quotient by the ideal generated by
  the non-constant monomials. In this paper, we provide a new class of
  irreducibles and prove some further cases of uniqueness of the
  factorisation.
\end{abstract}

\maketitle

\section{Introduction}

If $K$ is a field and $G$ an additive abelian ordered group, a
\emph{formal series} with \emph{coefficients} in $K$ and
\emph{exponents} in $G$ is a formal sum
$a = \sum_\gamma a_\gamma t^\gamma$, where $a_\gamma \in K$ and
$\gamma \in G$. We call \emph{support} of $a$ the set
$S_a := \{\gamma \in G : a_\gamma \neq 0\}$.  A formal series $a$ is
said to be a \emph{generalised power series} if its support $S_a$ is
well-ordered.  The collection of all generalised power series, denoted
by $K((G))$, is a field with respect to the obvious operations $+$ and
$\cdot$ defined for ordinary power series (see~\cite{Hahn1995}).

When $K$ is ordered, then $K((G))$ has a natural order as well,
obtained by stipulating that $0 < t^\gamma < a$ for any
$\gamma \in G^{>0}$ and for any positive element $a$ of the field
$K$. Moreover, if $K$ is real closed and $G$ is divisible, then
$K((G))$ is real closed. Conversely, any ordered field can be
represented as a subfield of some $\mathbb{R}((G))$
\cite{Gleyzal1937}.

For these reasons, the field $K((G))$ is a valuable tool for the study
of real closed fields. One can use them to prove, for instance, that
every real closed field $R$ has an integer part (i.e., a subring $Z$
such that for all $x \in R$ there exists a unique integer part
$\lfloor x \rfloor \in Z$ of $x$ such that
$\lfloor x \rfloor \leq x < \lfloor x \rfloor +
1$)~\cite{Mourgues1993}. For example, $\mathbb{Z} + K((G^{< 0}))$ is
an integer part of $K((G))$, where $K((G^{< 0}))$ is the subring of
the series with the support contained in the negative part $G^{< 0}$
of the group $G$.

The ring $\mathbb{Z} + K((G^{< 0}))$ has a non-trivial arithmetic
behaviour, some of which is already visible in
$K + K((G^{< 0})) = K((G^{\leq 0}))$. When $G$ is divisible, the ring
$K((G^{\leq 0}))$ is non-noetherian, as for instance we have
$t^{-1} = t^{-\frac{1}{2}} \cdot t^{-\frac{1}{2}} = t^{-\frac{1}{4}}
\cdot t^{-\frac{1}{4}} \cdot t^{-\frac{1}{4}} \cdot t^{-\frac{1}{4}} =
\ldots$.  However, Berarducci~\cite{Berarducci2000} proved that
$K((G^{\leq 0}))$, when $\mathbb{Q} \subseteq G$, contains irreducible
series, such as $1 + \sum_{n} t^{-\frac{1}{n}}$, answering a question
of Conway \cite{Conway1976}; in fact, his result implies that
$1 + \sum_{n} t^{-\frac{1}{n}}$ is irreducible in the ring of omnific
integers, the natural integer part of surreal numbers, which are also
of the form $\mathbb{Z} + \mathbb{R}((G^{< 0}))$ for a suitable $G$.

In order to state Berarducci's result, let the \emph{order type}
$\ot(a)$ of a power series $a \in K((G^{\leq 0}))$ be the ordinal
number representing the order type of its support $S_a$. Moreover, let
$J$ be the ideal of the series that are divisible by $t^\gamma$ for
some $\gamma \in G^{<0}$ (as noted before for $\gamma = -1$, such
series cannot be factored into irreducibles when $G$ is divisible,
since
$t^\gamma = t^{\frac{\gamma}{2}} t^{\frac{\gamma}{2}} = \ldots$).

\begin{thm}[{\cite[Thm.\ 10.5]{Berarducci2000}}]\label{maintB}
  If $a \in K((\mathbb{R}^{\leq 0})) \setminus J$ (equivalently,
  $a \in K((\mathbb{R}^{\leq 0}))$ not divisible by $t^\gamma$ for any
  $\gamma < 0$) has order type $\omega^{\omega^\alpha}$ for some
  ordinal $\alpha$, then both $a$ and $a + 1$ are irreducible.
\end{thm}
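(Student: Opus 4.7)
The approach, due to Berarducci, introduces an ordinal-valued \emph{order-value} $\vj: K((G^{\leq 0})) \to \on$ that measures the order type of the support of a series modulo the ideal $J$ and behaves multiplicatively with respect to the natural (Hessenberg) product $\otimes$ of ordinals. The properties that drive the proof are:
\begin{enumerate}
\item $\vj(a) = 0$ if and only if $a \in J$;
\item $\vj(a) = 1$ if and only if $a$ is a unit of $K((G^{\leq 0}))$, equivalently $a \in (K^* + J) \setminus J$;
\item $\vj(a + b) \leq \max(\vj(a), \vj(b))$, with equality when $\vj(a) \neq \vj(b)$;
\item (multiplicativity) $\vj(ab) = \vj(a) \otimes \vj(b)$;
\item if $a \notin J$ and $\ot(S_a) = \omega^{\omega^\alpha}$, then $\vj(a) = \omega^{\omega^\alpha}$.
\end{enumerate}

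The main obstacle, and the heart of the proof, is the multiplicativity (4). The inequality $\vj(ab) \leq \vj(a) \otimes \vj(b)$ is the soft direction: since $S_{ab} \subseteq S_a + S_b$, it follows from the standard estimate bounding the order type of a Minkowski sum of well-ordered sets by the natural product of their order types. The reverse inequality is considerably more delicate: cancellations in the coefficients of $ab$ can in principle collapse $\ot(S_{ab})$ far below $\ot(S_a) \otimes \ot(S_b)$, and one has to show that any such collapse is confined to the $J$-part of the product, so that the value modulo $J$ is preserved. This is typically done by transfinite induction, controlling the ``upper segments'' of $S_a$ and $S_b$ accumulating at $0$ (those realising the order-values), whose pairwise sums cannot be absorbed into $J$.

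Property (5) itself is an easy consequence of the additive indecomposability of $\omega^{\omega^\alpha}$: any proper initial segment of a well-ordered set of that order type has strictly smaller order type, so every ``tail'' retains order type $\omega^{\omega^\alpha}$; for $a \notin J$ this tail is all that survives modulo $J$. Granted (1)--(5), the theorem follows quickly. If $a = bc$, then by (5) and (4), $\omega^{\omega^\alpha} = \vj(a) = \vj(b) \otimes \vj(c)$. Writing $\vj(b)$ and $\vj(c)$ in Cantor normal form and using $(n\,\omega^{\beta_1}) \otimes (m\,\omega^{\gamma_1}) = nm\,\omega^{\beta_1 \oplus \gamma_1}$, one sees that a sum of several distinct monomials in the Hessenberg product cannot collapse to the single monomial $\omega^{\omega^\alpha}$; hence $\vj(b)$ and $\vj(c)$ must each be single monomials $\omega^{\beta_1}$, $\omega^{\gamma_1}$ with $\beta_1 \oplus \gamma_1 = \omega^\alpha$. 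The Hessenberg-indecomposability of $\omega^\alpha$ (any decomposition $\beta_1 \oplus \gamma_1 = \omega^\alpha$ forces one summand to vanish) then makes one of $\vj(b)$, $\vj(c)$ equal to $\omega^0 = 1$, and by (2) the corresponding factor is a unit.

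For $a + 1$: property (3) gives $\vj(a + 1) = \vj(a) = \omega^{\omega^\alpha}$ since $\vj(1) = 1 < \omega^{\omega^\alpha}$, and $a + 1 \notin J$ because $S_{a+1}$ still accumulates at $0$ (as $S_a$ must, given that $\ot(S_a) = \omega^{\omega^\alpha}$ is a limit ordinal and $a \notin J$). The same factorisation argument then yields the irreducibility of $a + 1$.
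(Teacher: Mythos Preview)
Your argument has a genuine gap at property~(2). It is \emph{not} true that $\vj(b)=1$ characterises the units of $K((\mathbb{R}^{\le 0}))$. The units of this ring are exactly $K^{*}$: if $bc=1$ then $\min S_{b}+\min S_{c}=0$, and since both minima are $\le 0$ they must both equal $0$, forcing $S_{b}=S_{c}=\{0\}$. By contrast, $\vj(b)=1$ only says $b\in (K^{*}+J)\setminus J$; for instance $b=1+t^{-1}$ has $\vj(b)=1$ but is not a unit (its inverse $t-t^{2}+t^{3}-\cdots$ lies in $K((\mathbb{R}^{>0}))$). So from $\vj(b)\odot\vj(c)=\omega^{\omega^{\alpha}}$ you correctly obtain, say, $\vj(b)=1$, but you cannot conclude that $b$ is a unit, and the proof stops there. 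The same defect affects your treatment of $a+1$: knowing $\vj(a+1)=\omega^{\omega^{\alpha}}$ is not enough.

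What is missing is precisely the step Berarducci handles with the \emph{critical point}, and which the present paper isolates as the notion of \emph{germ-like} series (\prettyref{def:germ-like}, \prettyref{lem:germ-like-crit}). The hypothesis is on $\ot(a)$, not merely on $\vj(a)$: since $\ot(a)=\omega^{\omega^{\alpha}}=\vj(a)$, the series $a$ is germ-like, i.e.\ $\crit(a)=0$. The extra multiplicative law $\crit(bc)=\crit(b)+\crit(c)$ (\prettyref{lem:crit-mult}) then forces $\crit(b)=\crit(c)=0$, so both factors are germ-like (\prettyref{cor:germ-like-prod}). Now $\vj(b)=1$ together with $b$ germ-like gives $\ot(b)=1$, hence $b\in K^{*}$, a genuine unit. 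For $a+1$ one has $\ot(a+1)=\omega^{\omega^{\alpha}}+1$ with $\vj(a+1)=\omega^{\omega^{\alpha}}$, so $a+1$ is again germ-like and the same argument applies. In short, the multiplicativity of $\vj$ alone does not suffice; you need a second multiplicative invariant (the critical point, or equivalently the leading term of the Cantor normal form of $\ot$) to pin the factor down to $K^{*}$.
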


This result was obtained by constructing a function resembling a
valuation but taking values into ordinal numbers.

\begin{defn}[{\cite[Def.\ 5.2]{Berarducci2000}}]
  For $a \in K((G^{\leq 0}))$, the \emph{order-value} $\vj(a)$ of $a$
  is:
  \begin{enumerate}
  \item if $a \in J$, then $\vj(a):=0$;
  \item if $a \in J+K$ and $a \notin J$, then $\vj(a):=1$;
  \item if $a \notin J+K$, then
    $\vj(a) := \min \{\ot(a') \,:\, a-a' \in J+K\}$.
  \end{enumerate}
\end{defn}

The difficult key result of~\cite{Berarducci2000} is that for
$G=\mathbb{R}$ the function $\vj$ is \emph{multiplicative}.

\begin{thm}[{\cite[Thm.\ 9.7]{Berarducci2000}}]
  For all $a,b\in K((\mathbb{R}^{\leq 0}))$ we have
  $\vj(ab) = \vj(a) \odot \vj(b)$ (where $\odot$ is Hessenberg's
  natural product on ordinal numbers).
\end{thm}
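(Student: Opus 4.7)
The plan is to establish the two inequalities $v_J(ab) \leq v_J(a) \odot v_J(b)$ and $v_J(ab) \geq v_J(a) \odot v_J(b)$ separately. First dispose of the trivial cases: if $a \in J$ or $b \in J$, then $ab \in J$ (using that $J$ is closed under multiplication by $K((G^{\leq 0}))$), so both sides are $0$; if $a, b \in J+K \setminus J$, one checks that $ab \in J + K$ with nonzero constant part, so both sides equal $1 \odot 1 = 1$. Henceforth assume $a,b \notin J + K$, and pick representatives $a' \equiv a$, $b' \equiv b \pmod{J+K}$ with $\ot(a') = v_J(a)$ and $\ot(b') = v_J(b)$; these may be taken to have support bounded below by some $\gamma < 0$.

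For the upper bound, I would verify first that $J + K$ behaves like an ideal in the quotient-relevant sense: since $a - a', b - b' \in J + K$ and $J$ absorbs products while $K \cdot K \subseteq K$, expansion gives $ab - a'b' \in J + K$. Hence $v_J(ab) \leq \ot(a'b')$. Now $S_{a'b'} \subseteq S_{a'} + S_{b'}$, and the classical bound on sums of well-ordered subsets of an ordered abelian group yields $\ot(S_{a'} + S_{b'}) \leq \ot(a') \odot \ot(b')$. Combining these inequalities gives the upper bound.

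The hard direction is the lower bound, where one must rule out cancellation: the coefficient of $t^\gamma$ in $a'b'$ is a finite sum $\sum_{\gamma_1+\gamma_2=\gamma}a'_{\gamma_1}b'_{\gamma_2}$, which could in principle vanish, potentially collapsing the order type below $v_J(a)\odot v_J(b)$. My plan here would be to isolate a ``critical'' subset $C \subseteq S_{a'} + S_{b'}$ on which the decomposition $\gamma = \gamma_1 + \gamma_2$ with $\gamma_i \in S_{a'}, S_{b'}$ is essentially unique, so that the coefficient is a single nonzero product and cannot cancel. One would then show that the order type of $C$, computed modulo elements lying in the ideal (i.e., bounded away from $0$), is still $v_J(a)\odot v_J(b)$. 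The role of $G = \mathbb{R}$ enters decisively here: the Archimedean property lets one analyse behaviour near $0$ by rescaling and comparing exponents, which is needed to prevent pathological accumulations that occur in more general groups.

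The principal obstacle is making the ``critical subset'' argument precise. It is not enough to construct one nonzero coefficient; one must build a subset of $S_{a'b'}$ whose order type modulo $J+K$ realises the full Hessenberg product. This is the technical core of Berarducci's theorem, and I would expect to need an inductive decomposition of $a'$ and $b'$ according to their Cantor normal forms (as ordinals) together with a careful analysis of how the supports of the ``layers'' interact in $\mathbb{R}$.
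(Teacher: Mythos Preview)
The paper does not give its own proof of this statement: it is cited (twice, in the introduction and in the preliminaries) as Berarducci's theorem and used as a black box. So there is no in-paper proof to compare against.

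On your proposal itself, two remarks. First, your upper-bound step contains a small slip: the claim that $ab - a'b' \in J+K$ is false in general. Writing $a = a' + j_{1} + k_{1}$ and $b = b' + j_{2} + k_{2}$ with $j_{i}\in J$, $k_{i}\in K$, the cross-terms $k_{1}b'$ and $k_{2}a'$ lie outside $J+K$ whenever $k_{1},k_{2}\neq 0$ (since $a',b'\notin J+K$). The fix is easy: absorb the constants into the representatives, i.e.\ choose $a'',b''$ with $a-a'',\,b-b''\in J$ and $\ot(a'')\leq \vj(a)+1$, $\ot(b'')\leq \vj(b)+1$; then $ab\equiv a''b''\bmod J$, and since $\vj$ only takes the values $0$, $1$, or $\omega^{\alpha}$, the stray $+1$'s disappear and $\vj(ab)\leq \vj(a)\odot\vj(b)$ follows. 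You also omitted the mixed case where exactly one of $a,b$ lies in $(J+K)\setminus J$, though that case is immediate.

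Second, and more importantly, your treatment of the lower bound is a plan, not a proof. You correctly locate the difficulty (cancellation in the convolution $\sum a'_{\gamma_{1}}b'_{\gamma_{2}}$) and correctly anticipate that the Archimedean property of $\mathbb{R}$ and an induction along Cantor normal forms will be needed, but you do not construct the ``critical subset'' you invoke, nor do you show its order type realises $\vj(a)\odot\vj(b)$ modulo $J+K$. In Berarducci's paper this direction is the technical heart of the argument and occupies most of the work: it goes through an ordinal-valued invariant defined directly on well-ordered subsets of $\mathbb{R}^{\leq 0}$ and a delicate analysis of how sums of such sets behave near the accumulation point $0$. Your outline points in the right direction, but the gap between it and a proof is substantial.
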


This immediately implies, for instance, that the ideal $J$ is prime,
so the quotient ring of \emph{germs} $K((\mathbb{R}^{\leq 0})) / J$ is
an integral domain, and also each elements admits a factorisation into
irreducibles (in fact, $J$ is prime for arbitrary choices of $G$,
see~\cite{Pitteloud2002}; for further extensions to arbitrary groups
$G$, see~\cite{Biljakovic2006}).

The above comments and theorems support and motivate the following
conjectures. If $a = b_1 \cdot \ldots \cdot b_n$ is a factorisation of
a series $a$, possibly with some reducible factors, a
\emph{refinement} is another factorisation of $a$ obtained by
replacing each $b_i$ with a further factorisation of $b_i$. More
formally, a refinement is a factorisation
$a = c_1 \cdot \ldots \cdot c_m$ such that, up to reordering
$c_{1},\dots,c_{m}$,
$b_i = k_i \cdot c_{m_i+1} \cdot \ldots \cdot c_{m_{i+1}}$ for some
constants $k_i \in K^*$ and some natural numbers
$0 = m_1 \leq \dots \leq m_{n+1} = m$.

\begin{conj}[Conway~\cite{Conway1976}]
  For every non-zero series $a \in K((\mathbb{R}^{\leq 0}))$, any two
  factorisations of $a$ admit common refinements.
\end{conj}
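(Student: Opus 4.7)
The plan is to leverage the multiplicativity of the order-value $v_J$ together with the unique factorisation of ordinals under Hessenberg's natural product. Given two factorisations $a = b_1 \cdots b_n = c_1 \cdots c_m$ of a non-zero series in $K((\mathbb{R}^{\leq 0}))$, I would first reduce to the case $a \notin J$: if $a \in J$ one can extract a monomial factor $t^\gamma$ to reach the non-divisible part, which lies outside $J$ by maximality, and reason separately on monomial factorisations. Assuming $a \notin J$, applying $v_J$ to both sides gives the ordinal identity $v_J(b_1) \odot \cdots \odot v_J(b_n) = v_J(c_1) \odot \cdots \odot v_J(c_m)$. Every ordinal $> 0$ factors uniquely (up to reordering) as a $\odot$-product of ordinals of the form $\omega^{\omega^\alpha}$, which are precisely the $\odot$-primes. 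Thus at the level of order-values both factorisations admit a canonical common refinement.

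The core step is to \emph{lift} this ordinal refinement to the ring: I would try to establish a splitting principle stating that whenever $b \in K((\mathbb{R}^{\leq 0})) \setminus J$ satisfies $v_J(b) = \alpha \odot \beta$ with $\alpha,\beta > 1$, there exists an actual factorisation $b = b'b''$ with $v_J(b') = \alpha$ and $v_J(b'') = \beta$. Iterating, every $b_i$ and every $c_j$ would be refined into factors of $\odot$-prime order-value $\omega^{\omega^{\alpha_k}}$, which by \prettyref{thm:maintB} are irreducible, and by uniqueness of the ordinal factorisation the two resulting multisets of order-values would coincide up to permutation.

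To finish one must then match the two refined factorisations into irreducibles. Here I would attempt to pair up the factors so that within each pair the order-values agree, and show that two irreducible series of the same order-value $\omega^{\omega^\alpha}$ which both divide a common series must differ only by an element of $J$ absorbable into the neighbouring factors. A natural way to organise this is to prove, in parallel, the companion conjecture that $K((\mathbb{R}^{\leq 0}))/J$ is a UFD with primes represented by the Berarducci irreducibles; a common refinement modulo $J$ can then be lifted to $K((\mathbb{R}^{\leq 0}))$ by distributing the $J$-errors among the other factors, which is legitimate because the product of the remaining factors is outside $J$.

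The main obstacle is the splitting principle in the second paragraph: while $v_J$ respects $\odot$, there is no known mechanism to invert it, i.e.\ to cut a given series with composite order-value into two pieces of prescribed order-values. Berarducci's machinery produces irreducibles and bounds order-values from below, but does not construct factorisations. I expect the paper's contribution to be precisely to handle distinguished subclasses, such as germ-like series, where explicit splittings or ad hoc arguments for uniqueness can be carried out, yielding partial instances of the conjecture rather than the full statement.
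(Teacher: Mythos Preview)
The statement you are attempting to prove is presented in the paper as an \emph{open conjecture}; the paper does not claim or contain a proof of it. What the paper actually establishes are partial results in its direction: unique factorisation for germs of order-value at most $\omega^{3}$ and for germ-like series of order-value at most $\omega^{3}$. So there is no ``paper's own proof'' to compare against, and your proposal should be read as an attack on an unsolved problem.

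Your strategy has a concrete and fatal gap, and it is sharper than the hesitation you express in your last paragraph. The splitting principle---that every $b\notin J$ with $v_{J}(b)=\alpha\odot\beta$, $\alpha,\beta>1$, admits a factorisation $b=b'b''$ with $v_{J}(b')=\alpha$, $v_{J}(b'')=\beta$---is not merely unproved, it is \emph{false}. The paper cites Pommersheim--Shahriari for the existence of irreducible (germ-like) series of order-value $\omega^{2}=\omega\odot\omega$, and in its final section it explicitly constructs irreducible germs and irreducible series of order-value $\omega^{3}=\omega\odot\omega\odot\omega$. These are irreducibles whose order-value is $\odot$-composite, so no such splitting can exist for them. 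Consequently the refinement you envisage, pushing every factor down to pieces of $\odot$-prime order-value $\omega^{\omega^{\alpha}}$, is impossible in general.

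A second error compounds this. You invoke Berarducci's theorem to conclude that factors of order-value $\omega^{\omega^{\alpha}}$ are irreducible, but that theorem requires the stronger hypothesis that the \emph{order type} be $\omega^{\omega^{\alpha}}$ (equivalently, that the series be germ-like). A series such as $(1+t^{-1})\cdot b$ with $b$ germ-like of order-value $\omega^{\omega^{\alpha}}$ still has $v_{J}=\omega^{\omega^{\alpha}}$ (since $v_{J}(1+t^{-1})=1$), yet it is visibly reducible and $1+t^{-1}$ is not a unit in $K((\mathbb{R}^{\leq 0}))$. Thus even at the bottom of your proposed refinement tree, order-value alone does not force irreducibility. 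The paper's actual contributions circumvent both issues by restricting to germ-like series and to small order-values, where Pitteloud's primality result for order-value $\omega$ does the real work; your outline does not recover these arguments and does not extend them.
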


For instance, it is easy to verify that for all $\gamma < 0$, any two
factorisations of $t^\gamma$ admit a common refinement. Similarly, any
polynomial in $t^{-\gamma}$ with coefficients in $K$ has infinitely
many factorisations, but again any two of them admit a common
refinement.

\begin{conj}[Berarducci~\cite{Berarducci2000}]
  Every non-zero germ in $K((\mathbb{R}^{\leq 0})) / J$ admits a
  unique factorisation into irreducibles.
\end{conj}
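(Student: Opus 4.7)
The plan is to combine the multiplicativity of the order-value $v_J$ with the unique decomposition of ordinals under Hessenberg's natural sum, in order first to pin down the numerical data of any factorisation and then to upgrade this to actual uniqueness of the irreducible factors up to associates.

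I would begin by checking that $v_J$ descends to a well-defined function on the germ ring $K((\mathbb{R}^{\leq 0}))/J$, and that $[a]$ is a unit exactly when $a$ has non-zero constant term. On non-unit germs the order-value takes values of the form $\omega^{\alpha}$ with $\alpha\geq 1$, and multiplicativity together with the identity $\omega^{\alpha}\odot\omega^{\beta}=\omega^{\alpha\oplus\beta}$ shows that any factorisation $[a]=[b_1]\cdots[b_n]$ with $v_J([a])=\omega^{\alpha}$ forces $\alpha=\alpha_1\oplus\cdots\oplus\alpha_n$, where $v_J([b_i])=\omega^{\alpha_i}$. Since every ordinal has a unique natural-sum decomposition into additively indecomposable pieces $\omega^{\gamma}$ (Cantor normal form), writing $\alpha=\omega^{\gamma_1}\oplus\cdots\oplus\omega^{\gamma_k}$ immediately bounds the length of any factorisation into non-units by $k$. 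A preliminary lemma I would aim for is that every irreducible germ has order-value of the form $\omega^{\omega^{\gamma}}$: whenever $\alpha_i$ is additively decomposable one tries to produce a non-trivial factorisation of $[b_i]$ with prescribed order-values, for which Berarducci's irreducibility theorem together with the new class of irreducibles announced in the abstract would supply the natural anchor cases.

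Once the multiset $\{\omega^{\omega^{\gamma_1}},\ldots,\omega^{\omega^{\gamma_k}}\}$ of irreducible order-values has been identified as an invariant of $[a]$, the remaining task is to show that any two irreducible germs of the same order-value that both divide $[a]$ must be associates. I would attempt this by induction on $\alpha$: pair off the irreducible factors of maximal order-value $\omega^{\omega^{\gamma_1}}$ appearing in two given factorisations of $[a]$ and try to match them by a Euclidean-style cancellation, exploiting the representative of minimal order type inside each germ class. This is where I expect the genuine obstacle: the germ ring admits no obvious division algorithm and no canonical representative for each irreducible class, so merely matching order-values does not force two irreducibles to agree modulo $J$. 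The partial uniqueness results announced in the abstract presumably correspond to restricted cases --- low ordinal exponents, or factorisations with additional combinatorial structure on supports --- in which a direct cancellation can be pushed through, while the full conjecture remains open precisely because no such cancellation is available in general.
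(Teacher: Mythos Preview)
The statement you are attempting to prove is an open \emph{conjecture}; the paper does not prove it. What the paper actually establishes is the restricted case of germs of order-value at most $\omega^{3}$, and the method is quite different from yours: it rests on the fact (deduced from Pitteloud's work) that every germ of order-value $\omega$ is \emph{prime}, so that any non-trivial factorisation of a germ of order-value $\omega^{2}$ or $\omega^{3}$ must contain a prime factor of order-value $\omega$, whence uniqueness follows by the standard cancellation argument for primes.

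Your outline also contains two concrete errors. First, your characterisation of units is wrong: a germ $a+J$ is a unit precisely when $v_{J}(a)=1$, i.e.\ when $a\in (J+K)\setminus J$, which is strictly stronger than ``$a$ has non-zero constant term''. For instance $a=1+\sum_{n}t^{-1/n}$ has constant term $1$ but $v_{J}(a)=\omega$, and its germ is irreducible rather than a unit. Second, and more seriously, your ``preliminary lemma'' that every irreducible germ has order-value of the form $\omega^{\omega^{\gamma}}$ is false. The paper explicitly constructs irreducible germs of order-value $\omega^{3}$, and Pommersheim--Shahriari had already produced irreducibles of order-value $\omega^{2}$; neither $2$ nor $3$ is of the form $\omega^{\gamma}$. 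Hence the numerical invariant you extract from the Cantor normal form of $\alpha$ does \emph{not} determine the number of irreducible factors, and the strategy of reducing uniqueness to the combinatorics of natural sums collapses at this point. You correctly sense in your final paragraph that the cancellation step is the real obstruction, but the failure actually occurs earlier: there is no reason for two irreducible factorisations of the same germ even to have the same length.
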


Berarducci's work was partially strengthened by
Pitteloud~\cite{Pitteloud2001}, who proved that any (irreducible)
series in $K((\mathbb{R}^{\leq 0}))$ of order type $\omega$ or
$\omega + 1$ and order-value $\omega$ is actually prime.

Adapting Pitteloud's technique, we shall prove that the germs of
order-value $\omega$ are prime in $K((\mathbb{R}^{\leq 0})) / J$; in
particular, the germs of order-value at most $\omega^3$ admit a unique
factorisation into irreducibles, supporting Berarducci's conjecture.

\begin{namedthm}{Theorems~\ref{thm:prime-in-j}-\ref{thm:ufdq}}
  All germs in $K((\mathbb{R}^{\leq 0})) / J$ of order-value $\omega$
  are prime. Every non-zero germ in $K((\mathbb{R}^{\leq 0})) / J$ of
  order-value $\leq \omega^3$ admits a unique factorisation into
  irreducibles.
\end{namedthm}

Moreover, we shall isolate the notion of \emph{germ-like} series: we
say that $a \in K((\mathbb{R}^{\leq 0}))$ is germ-like if either
$\ot(a) = \vj(a)$, or $\vj(a) > 1$ and $\ot(a) = \vj(a) + 1$ (see
\prettyref{def:germ-like}). The main result of~\cite{Berarducci2000}
can be rephrased as saying that germ-like series of order-value
$\omega^{\omega^\alpha}$ are irreducible, while the main result of
\cite{Pitteloud2001} is that germ-like series of order-value $\omega$
are prime. Moreover, Pommersheim and Shahriari~\cite{Pommersheim2005}
proved that germ-like series of order-value $\omega^2$ have a unique
factorisation, and that some of them are irreducible.

By generalising an argument in~\cite{Berarducci2000}, we shall see
that germ-like series always have factorisations into
irreducibles. Together with Pitteloud's result, we shall be able to
prove that the factorisation into irreducibles of germ-like series of
order-value at most $\omega^3$ must be unique.

\begin{namedthm}{Theorems~\ref{thm:non-crit-fact}-\ref{thm:ufdnc}}
  All non-zero germ-like series in $K((\mathbb{R}^{\leq 0}))$ admit
  factorisations into irreducibles. Every non-zero germ-like series in
  $K((\mathbb{R}^{\leq 0}))$ of order-value $\leq \omega^3$ admits a
  unique factorisation into irreducibles.
\end{namedthm}

For completeness, we shall also verify that irreducible germs and
series of order-value $\omega^3$ do exist.

\begin{namedthm}{Theorems~\ref{thm:irred-germ-w-3}-\ref{thm:irred-series-w-3}}
  There exist irreducible germs in $K((\mathbb{R}^{\leq 0})) / J$ and
  irreducible series in $K((\mathbb{R}^{\leq 0}))$ of order-value
  $\omega^3$.
\end{namedthm}

\subsection*{Further remarks}

As noted before, all the known results about irreducibility and
primality of generalised power series are in fact about germ-like
power series. In view of this, we propose the following conjecture,
which seems to be a reasonable intermediate statement between Conway's
conjecture and Berarducci's conjecture.

\begin{conj}
  Every non-zero germ-like series in $K((\mathbb{R}^{\leq 0}))$ admits
  a unique factorisation into irreducibles.
\end{conj}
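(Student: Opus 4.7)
The plan is to extend Theorems~\ref{thm:prime-in-j} and~\ref{thm:ufdnc} from order-value $\leq \omega^3$ to arbitrary order-values, by transfinite induction on $\vj$. Since Theorem~\ref{thm:non-crit-fact} already ensures that every non-zero germ-like series admits a factorisation into irreducibles, the conjecture reduces to uniqueness, which by the standard atomic-domain argument reduces to the assertion that every irreducible germ-like series is prime.

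The central tool is the multiplicativity of $\vj$ under Hessenberg's natural product $\odot$: if $a$ is a germ-like irreducible and $a \mid bc$, writing $bc = ad$ yields $\vj(b) \odot \vj(c) = \vj(a) \odot \vj(d)$. The base case $\vj(a) = \omega$ is Pitteloud's theorem, on which Theorem~\ref{thm:prime-in-j} is based. For the inductive step one would hope, working within the $\odot$-semiring of ordinals, to show that $\vj(a)$ divides one of $\vj(b)$, $\vj(c)$, and then lift this ordinal-level information to the level of series: extract from, say, $b$ a germ-like piece $b'$ with $\vj(b') = \vj(a)$, and then use an analogue of Pitteloud's cancellation to conclude that $a \mid b'$ (possibly modulo $J$).

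A natural route, following the strategy the paper uses at order-values $\leq \omega^3$, is to reduce to the base case $\vj = \omega$ by combining a change of variable $t \mapsto t^{1/n}$ with truncation to prime ideals of the form $\{x : t^\gamma \mid x\}$ for suitable $\gamma \in \mathbb{R}^{<0}$, running the induction along the Cantor normal form of $\vj(a)$. A second possibility is to generalise Pitteloud's cancellation directly to supports of higher order type, for instance by developing a ``series division algorithm'' adapted to transfinite supports, and then combining it with Berarducci's irreducibility result for the ordinals $\omega^{\omega^\alpha}$ to handle the multiplicatively indecomposable strata one by one.

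The main obstacle is that Pitteloud's method is intimately tied to the combinatorics of $\omega$-sequences: at $\vj(a) = \omega$ the relevant supports are ordinary sequences and the cancellation is performed essentially term by term, whereas at higher order-values the supports contain nested limit points of ever-increasing cofinality and no direct analogue is available. Moreover, germ-likeness is fragile under both substitution and truncation (the delicate relation between $\ot(a)$ and $\vj(a)$ is easily destroyed), so any reduction to the base case must be paired with a careful patching argument to reassemble a global factorisation from local data. Developing a genuinely transfinite version of Pitteloud's technique, or introducing a new invariant refining $\vj$ that detects divisibility more finely, appears to be the real content of the conjecture and the reason it remains open beyond order-value $\omega^3$.
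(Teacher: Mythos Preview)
The statement you were given is a \emph{conjecture}, and the paper offers no proof of it; it is explicitly presented as an open problem motivated by the partial results of Theorems~\ref{thm:non-crit-fact} and~\ref{thm:ufdnc}. Your proposal is therefore not being compared against an existing argument: there is none. To your credit, you recognise this yourself in the final paragraph, where you explain why the problem remains open beyond order-value $\omega^{3}$. What you have written is not a proof but a research outline together with an honest assessment of the obstacles, and as such it should not be labelled a proof attempt.

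Two points of accuracy are worth flagging. First, your description of ``the strategy the paper uses at order-values $\leq \omega^{3}$'' is not quite right: the paper does not employ any change of variable $t \mapsto t^{1/n}$ or truncation to ideals $\{x : t^{\gamma} \mid x\}$. The actual argument is much more modest: since $\vj(a) \leq \omega^{3}$ and $\vj$ is multiplicative for $\odot$, any non-trivial factorisation of $a$ must contain a factor of order-value exactly $\omega$, and Pitteloud's theorem says that germ-like series of order-value $\omega$ are prime. There is no inductive machinery along the Cantor normal form, only the elementary observation that $\omega$, $\omega^{2}$, $\omega^{3}$ each force a factor of order-value $\omega$. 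Second, the hope that ``$\vj(a)$ divides one of $\vj(b)$, $\vj(c)$'' in the $\odot$-semiring is generally false: $\odot$-irreducible ordinals are precisely those of the form $\omega^{\omega^{\alpha}}$, so for a germ-like irreducible $a$ with $\vj(a) = \omega^{2}$, say, one cannot expect such a divisibility relation at the ordinal level. This is part of why the problem is genuinely hard, and your sketch understates the difficulty here.

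In summary: there is no proof in the paper to compare against, your write-up is a discussion rather than a proof, and while your identification of Pitteloud's $\omega$-case as the bottleneck is correct, your account of the paper's method and of the ordinal-arithmetic step contains inaccuracies.
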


In order to treat other series that are not germ-like, we note that
\prettyref{lem:crit-mult} and its following
\prettyref{cor:head-order-value} suggest an alternative multiplicative
order-value map whose value is the \emph{first} term of the Cantor
normal form of the order type, rather than the last infinite one. This
has several consequences about irreducibility of general series; for
instance, if $P$ is the multiplicative group of the non-zero series
with finite support, it implies that the localised ring
$P^{-1}K((\mathbb{R}^{\leq 0}))$ admit factorisation into
irreducibles. Other consequences of the new order-value will be
investigated in a future work.

\section{Preliminaries}

\subsection{Ordinal arithmetic}

This subsection is a self-contained presentation of the classical and
well-known properties of ordinal arithmetic.  First, let us briefly
recall how ordinals can be introduced. Two (linearly) ordered sets $X$
and $Y$ are called order similar if they are isomorphic. The order
similarity is an equivalence relation and its classes are called
\emph{order types}.

An \emph{ordinal number} is the order type of a \emph{well-ordered}
set, i.e., an ordered set with the property that any non-empty subset
has a minimum. Given two ordinal numbers $\alpha, \beta$, we say that
$\alpha \leq \beta$ if there are two representatives $A$ and $B$ such
that $A \subseteq B$ and such that the inclusion of $A$ in $B$ is a
homomorphism; we say that $\alpha < \beta$ if $\alpha \leq \beta$ and
$\alpha \neq \beta$. A key observation in the theory of ordinals is
that $\on$ itself is \emph{well-ordered by $\leq$}. This lets us
define ordinal arithmetic by induction on $\leq$:

\begin{itemize}
\item the minimum ordinal in $\on$ is called \emph{zero} and is
  denoted by $0$;
\item given an $\alpha \in \on$, the \emph{successor} $S(\alpha)$ of
  $\alpha$ is the minimum $\beta$ such that $\beta > \alpha$;
\item given a set $A \subseteq \on$, the \emph{supremum} $\sup(A)$ is
  the minimum $\beta$ such that $\beta \geq \alpha$ for all
  $\alpha \in A$;
\item \emph{sum}: $\alpha + 0 := \alpha$,
  $\alpha + \beta := \sup_{\gamma < \beta} \{S(\alpha + \gamma)\}$;
\item \emph{product}: $\alpha \cdot 0 := 0$,
  $\alpha \cdot 1 := \alpha$,
  $\alpha \cdot \beta := \sup_{\gamma < \beta} \{\alpha \cdot \gamma +
  \alpha\}$;
\item \emph{exponentiation}: $\alpha^0 := 1$,
  $\alpha^\beta := \sup_{\gamma < \beta} \{\alpha^\gamma \cdot
  \alpha\}$.
\end{itemize}

One can easily verify that sum and product are associative, but not
commutative, that the product is distributive over the sum in the
second argument, and that
$\alpha^{\beta + \gamma} = \alpha^\beta \cdot
\alpha^\gamma$. Moreover, sum, product and exponentiation are strictly
increasing and continuous in the second argument.

The \emph{finite} ordinals are the ones that are represented by finite
ordered sets. They can be identified with the natural numbers $0$,
$1$, $2$, $\ldots$, where ordinal arithmetic coincides with Peano's
arithmetic. The ordinals that are not zero or successors are called
\emph{limit}, and one can verify that $\alpha$ is a limit if and only
if $\alpha \neq 0$ and $\alpha = \sup_{\beta < \alpha} \{\beta\}$. The
smallest limit ordinal is called $\omega$.

The three operations admit notions of subtraction, division and
logarithm. More precisely, given $\alpha \leq \beta$, there exist:

\begin{itemize}
\item a unique $\gamma$ such that $\alpha + \gamma = \beta$;
\item unique $\gamma, \delta$ with $\delta < \alpha$ such that
  $\alpha \cdot \gamma + \delta = \beta$;
\item unique $\gamma, \delta, \eta$ with $\delta < \beta$,
  $\eta < \alpha$ such that
  $\beta^{\gamma} \cdot \delta + \eta = \alpha$.
\end{itemize}

In particular, for all $\alpha \in \on$ there is a unique finite
sequence $\beta_1 \geq \beta_2 \geq \ldots \geq \beta_n \geq 0$ such
that
\[
  \alpha = \omega^{\beta_1} + \ldots + \omega^{\beta_n}.
\]
The expression on the right-hand side is called \emph{Cantor normal
  form} of $\alpha$. Given two ordinals in Cantor normal form, it is
rather easy to calculate the Cantor normal form of their sum and
product, using associativity, distributivity in the second argument
and the following rules:
\begin{itemize}
\item if $\alpha < \beta$,
  $\omega^\alpha + \omega^\beta = \omega^\beta$;
\item if $\alpha = \omega^{\beta_1} + \ldots + \omega^{\beta_n}$ is in
  Cantor normal form and $\gamma > 0$, then
  $\alpha \cdot \omega^\gamma = \omega^{\beta_1 + \gamma}$.
\end{itemize}

Finally, we recall that $\on$ also admits different commutative
operations called \emph{Hessenberg's natural sum $\oplus$ and natural
  product} $\odot$. These can be defined rather easily using the
Cantor normal form. Given
$ \alpha = \omega^{\gamma_1} + \omega^{\gamma_2} + \ldots +
\omega^{\gamma_n}$ and
$ \beta = \omega^{\gamma_{n+1}} + \omega^{\gamma_{n+2}} + \ldots +
\omega^{\gamma_{n+m}}$ in Cantor normal form, the natural sum
$\alpha \oplus \beta$ is defined as
$\alpha \oplus \beta := \omega^{\gamma_{\pi(1)}} +
\omega^{\gamma_{\pi(2)}} + \ldots + \omega^{\gamma_{\pi(n+m)}}$, where
$\pi$ is a permutation of the integers $1, \ldots, n+m$ such that
$\gamma_{\pi(1)} \geq \ldots \geq \gamma_{\pi(n+m)}$, and the natural
product is defined by
$\alpha \odot \beta := \bigoplus_{1 \leq i \leq n} \bigoplus_{n+1 \leq
  j \leq k+m} \omega^{\gamma_j \oplus \gamma_j}$.

\subsection{Order-value}

We now recall the definition and the basic properties of the
order-value map introduced by Berarducci in~\cite{Berarducci2000}.

Given $a \in K((\mathbb{R}^{\leq 0}))$, we let $\ot(a)$ be the order
type of support $S_a$ of $a$ (recall that the support of $a$ is a
well-ordered subset of $\mathbb{R}$, hence $\ot(a)$ is a countable
ordinal).  One can verify that given two series
$a,b \in K((\mathbb{R}^{\leq 0}))$ we have:

\begin{itemize}
\item $\ot(a + b) \leq \ot(a) \oplus \ot(b)$;
\item $\ot(a \cdot b) \leq \ot(a) \odot \ot(b)$.
\end{itemize}

However, the above inequalities may well be strict for certain values
of $a$ and $b$. In order to get a better algebraic behaviour,
Berarducci introduced the so called \emph{order-value}
$\vj : K((\mathbb{R}^{\leq 0})) \rightarrow \on$ by considering only
the `tail' of the support.

\begin{defn}
  We let $J$ be the ideal of $K((\mathbb{R}^{\leq 0}))$ generated by
  the set of monomials $ \{ t^\gamma : \gamma \in G^{<0} \}$. For
  every $a \in K((\mathbb{R}^{\leq 0}))$, we call the \emph{germ} of
  $a$ the coset $a+J \in K((\mathbb{R}^{\leq0}))/J$.
\end{defn}

\begin{rem}
  The ideal $J$ can also be defined by looking at the support: for
  every series $a$, $a \in J$ if and only if there exists $\gamma < 0$
  such that $S_a \leq \gamma$. In particular, $a+J = b+J$ if and only
  if there exists $\gamma < 0$ such that for all $\delta \geq \gamma$,
  the coefficients of $t^\delta$ in $a$ and $b$ coincide.
\end{rem}

\begin{nota}
  Let $V \subseteq K((\mathbb{R}^{\leq 0}))$ be any $K$-vector
  space. Then, let us write, for
  $a, \, b \in K((\mathbb{R}^{\leq 0}))$, $a \equiv b \mod V$ if
  $a-b \in V$.
\end{nota}

\begin{defn}
  The \emph{order-value}
  $\vj : K((\mathbb{R}^{\leq 0})) \rightarrow \on$ is defined by:
  \[\vj (a) := \left\{
      \begin{array}{ll}
        0 & \quad \textrm{if } \, a \in J,\\
        1 & \quad \textrm{if } \, a \not\in J \textrm{ and } a \in J + K,\\
        \min \{ \ot(b) : b \equiv  a \, \mod \, J + K \} & \quad \textrm{otherwise}.
      \end{array}
  \right.
  \]
\end{defn}

\begin{rem}\label{rem:vj-cnf}
  Suppose that $a$ is not in $J+K$ and write
  $\ot(a) = \omega^{\alpha_1} + \ldots + \omega^{\alpha_n}$ in Cantor
  normal form. Then $\vj(a)$ is precisely the last infinite term of
  the Cantor normal form of $\ot(a)$ (which is either
  $\omega^{\alpha_{n-1}}$ or $\omega^{\alpha_n}$, depending on whether
  $0 \in S_a$ or $0 \notin S_a$). Note in particular that the
  order-value takes only values of the form $\omega^\alpha$ or $0$.
\end{rem}

Furthermore, since $a \equiv b \mod J$ implies $\vj(a) = \vj(b)$, the
map $\vj$ induces an analogous order-value
$\overline{v}_J : K((\mathbb{R}^{\leq 0})) / J \rightarrow \on$ by
defining $\overline{v}_J(a + J) := \vj(a)$. With a slight abuse of
notation, we will use the symbol $\vj$ for both $\vj$ and
$\overline{v}_J$.

The key and difficult result of~\cite{Berarducci2000} is that the
function $\vj$ is multiplicative.

\begin{prop}[{\cite[Lem.\ 5.5 and Thm.\ 9.7]{Berarducci2000}}]
  Let $a, b \in K((\mathbb{R}^{\leq 0}))$. Then:
  \begin{enumerate}
  \item $\vj(a + b) \leq \, \max\{\vj(a), \vj(b)\}$, with equality if
    $\vj(a) \neq \vj(b)$,
  \item $\vj(ab) = \vj(a) \odot \vj(b)$ \emph{(multiplicative
      property)}.
  \end{enumerate}
\end{prop}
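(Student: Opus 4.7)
For part (1), I would first dispose of the degenerate cases where $a$ or $b$ lies in $J$ or in $J+K$, which follow directly from the definition of $\vj$. In the main case, pick minimal representatives $a' \equiv a$ and $b' \equiv b \mod J + K$ with $\ot(a') = \vj(a) = \omega^\alpha$ and $\ot(b') = \vj(b) = \omega^\beta$. Since $a+b \equiv a' + b' \mod J + K$, we have
\[
  \vj(a+b) \leq \ot(a' + b') \leq \ot(a') \oplus \ot(b') = \omega^\alpha \oplus \omega^\beta < \omega^{\max(\alpha, \beta) + 1},
\]
and as \prettyref{rem:vj-cnf} forces $\vj(a+b)$ to be a power of $\omega$ (or $0$), this yields $\vj(a+b) \leq \omega^{\max(\alpha, \beta)} = \max\{\vj(a), \vj(b)\}$. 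The equality clause when $\vj(a) < \vj(b)$ is then formal: applying the just-established subadditivity to the identity $b = (a+b) + (-a)$ gives $\vj(b) \leq \max\{\vj(a+b), \vj(a)\}$, which forces $\vj(a+b) \geq \vj(b)$.

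For part (2), the upper bound $\vj(ab) \leq \vj(a) \odot \vj(b)$ would be handled by a similar calculation. Write $a = a' + j_a + k_a$ and $b = b' + j_b + k_b$ with $j_a, j_b \in J$ and $k_a, k_b \in K$, using minimal $a', b'$ as above. Expanding the product, every term containing a $J$-factor is absorbed into $J$ (since $J$ is an ideal), and the constant $k_a k_b$ is absorbed into $K$, leaving $ab \equiv a' b' + k_b a' + k_a b' \mod J + K$. Applying part (1) and the estimates $\vj(a' b') \leq \ot(a' b') \leq \ot(a') \odot \ot(b') = \omega^{\alpha \oplus \beta}$, $\vj(k_b a') \leq \vj(a)$, $\vj(k_a b') \leq \vj(b)$ bounds $\vj(ab)$ by $\omega^{\alpha \oplus \beta}$, using that this ordinal dominates both $\omega^\alpha$ and $\omega^\beta$ when both are infinite; the edge cases where $\vj(a)$ or $\vj(b)$ belongs to $\{0, 1\}$ are immediate.

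The reverse inequality $\vj(ab) \geq \vj(a) \odot \vj(b)$ is by far the main obstacle and is essentially the content of Berarducci's main theorem. My plan would be to follow his strategy: first reduce, via the (independently established) primality of $J$, to the case $a, b \notin J$, and then argue by transfinite induction on $\vj(a) \odot \vj(b)$ to exhibit a well-ordered tail of $S_{ab}$ accumulating at $0$ of order type at least $\omega^{\alpha \oplus \beta}$ whose coefficients genuinely survive. The heart of the matter is a combinatorial non-cancellation statement: for each exponent $\gamma$ in this tail, one must isolate a distinguished decomposition $\gamma = \gamma_1 + \gamma_2$ with $\gamma_1 \in S_a$, $\gamma_2 \in S_b$ whose contribution $a_{\gamma_1} b_{\gamma_2}$ is not annihilated by the other decompositions of $\gamma$. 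Making this survival argument rigorous requires the full machinery of critical points and an induction along the $\omega^{\omega^\alpha}$-scale developed in \cite{Berarducci2000}, which I would cite rather than reprove.
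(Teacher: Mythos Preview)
The paper does not give its own proof of this proposition; it is quoted verbatim as \cite[Lem.~5.5 and Thm.~9.7]{Berarducci2000} without argument. Your sketch of part~(1) and of the upper bound in part~(2) is correct, and since you ultimately defer the hard lower bound in~(2) to Berarducci's machinery, your proposal amounts to exactly what the paper does.
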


The multiplicative property is the crucial ingredient that leads to
the main results in~\cite{Berarducci2000}. For instance, it implies
the following.

\begin{prop}[{\cite[Thm.\ 9.8]{Berarducci2000}}]
  The ideal $J$ of $ K((\mathbb{R}^{\leq 0}))$ is prime.
\end{prop}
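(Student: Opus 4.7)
The plan is essentially a one-line deduction from the multiplicative property of $\vj$ that has just been stated. The equivalence that drives it is $a \in J \iff \vj(a) = 0$, which is immediate from the definition of $\vj$ (the first clause covers the forward direction, and the remaining clauses produce values $\geq 1$, covering the converse).

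First I would note that $J$ is a proper ideal, since $\vj(1) = 1 \neq 0$, hence $1 \notin J$. Then, to show primality, suppose $ab \in J$. By the observation above, $\vj(ab) = 0$, and the multiplicative property gives
\[
\vj(a) \odot \vj(b) \;=\; \vj(ab) \;=\; 0.
\]
Because Hessenberg's natural product on $\on$ vanishes exactly when one of its arguments is $0$ (this is immediate from the Cantor-normal-form definition of $\odot$ given in the preliminaries: if both $\vj(a)$ and $\vj(b)$ are nonzero, then their natural product is a nonempty natural sum of terms of the form $\omega^{\gamma_i \oplus \gamma_j}$, hence nonzero), it follows that $\vj(a) = 0$ or $\vj(b) = 0$, that is, $a \in J$ or $b \in J$.

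There is no real obstacle here: the entire difficulty has been absorbed into the multiplicative property of $\vj$, which is cited as the hard theorem of \cite{Berarducci2000} and which we are allowed to invoke. The only thing worth flagging carefully in the write-up is the cancellation property of $\odot$ at $0$, but this is a trivial consequence of the definition of the natural product in terms of Cantor normal form and need not be belabored.
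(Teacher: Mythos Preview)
Your argument is correct and follows exactly the same route as the paper: use the equivalence $a\in J\iff \vj(a)=0$ together with the multiplicative property $\vj(ab)=\vj(a)\odot\vj(b)$, and conclude via the fact that the natural product has no zero divisors. The only additions you make (properness of $J$ and the explicit justification that $\odot$ vanishes only when a factor does) are minor elaborations of points the paper leaves implicit.
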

\begin{proof}
  Note that $a \in J$ if and only if $\vj(a)=0$. It follows that for
  all $a, \, b \in K((\mathbb{R}^{\leq 0}))$, if the product $ab$ is
  in $J$, that is, $\vj(ab)=0$, then $\vj(a) \odot \vj(b)=0$, which
  implies $\vj(a)=0$ or $\vj(b)=0$, hence $a \in J$ or $b \in J$.
\end{proof}

Finally, we recall some additional notions and results from
\cite{Berarducci2000}.

\begin{defn}
  Given
  $a = \sum_{\beta} a_\beta t^{\beta} \in K((\mathbb{R}^{\leq 0}))$
  and $\gamma \in \mathbb{R}^{\leq 0}$, we define:
  \begin{itemize}
  \item the \emph{truncation} of $a$ at $\gamma$ is
    $a_{\vert \gamma} := \sum_{\beta \leq \gamma } a_\beta t^{\beta}$,
  \item the \emph{translated truncation} of $a$ at $\gamma$ is
    $a^{\vert \gamma} := t^{-\gamma} a_{\vert \gamma}$.
  \end{itemize}
  The equivalence class $a^{\vert \gamma} + J$ is the \emph{germ of
    $a$ at $\gamma$}.
\end{defn}

It turns out that translated truncations behave like a sort of
`generalised coefficients', as they satisfy the following equation.

\begin{prop}[{\cite[Lem.\ 7.5(2)]{Berarducci2000}}]
  For all $a, b \in K((\mathbb{R}^{\leq 0}))$ and
  $\gamma \in \mathbb{R}^{\leq 0}$ we have:
  \[
    {(ab)}^{\vert\gamma} \equiv \sum_{\delta + \varepsilon = \gamma}
    a^{\vert\delta} b^{\vert\varepsilon} \mod J \quad
    \textrm{\emph{(convolution formula)}}.
  \]
\end{prop}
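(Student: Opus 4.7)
My plan is to reduce the congruence to a coefficient-wise comparison. Since $J$ is the ideal of series whose support is bounded above by some strictly negative exponent, the statement $(ab)^{\vert\gamma} \equiv \sum_{\delta+\varepsilon=\gamma} a^{\vert\delta} b^{\vert\varepsilon} \mod J$ amounts to showing the two sides agree at every position $\mu \in (\gamma_1, 0]$ for some $\gamma_1 < 0$. After factoring out $t^{-\gamma}$ from both sides via the identity $a^{\vert\delta} b^{\vert\varepsilon} = t^{-\gamma} a_{\vert\delta} b_{\vert\varepsilon}$ (valid whenever $\delta+\varepsilon=\gamma$), this reduces to comparing the coefficients of $t^\eta$ in $(ab)_{\vert\gamma}$ and in $\sum_{\delta+\varepsilon=\gamma} a_{\vert\delta} b_{\vert\varepsilon}$ for exponents $\eta$ close to $\gamma$. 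Each such coefficient is a finite sum of products $a_\beta b_{\beta'}$ indexed by pairs $(\beta,\beta') \in S_a \times S_b$ with $\beta + \beta' = \eta$ and $\beta,\beta' \leq 0$, by well-orderedness.

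The heart of the proof is a bijective matching at the top position $\eta = \gamma$: the constraints $\beta + \beta' = \gamma = \delta + \varepsilon$ together with $\beta\leq\delta$ and $\beta'\leq\varepsilon$ force $\delta = \beta$ and $\varepsilon = \beta'$, so each pair $(\beta,\beta')$ appearing in the Cauchy decomposition of $(ab)_\gamma$ lifts to a unique $(\delta,\varepsilon)$ in the right-hand sum, and the coefficients at $\eta = \gamma$ match on the nose. I would then extend this matching to a small half-open interval $\eta \in (\gamma_1 + \gamma,\,\gamma]$, showing that any residual mismatch at $\eta < \gamma$ must involve pairs $(\beta,\beta')$ with $\min(\beta,\beta')$ bounded strictly away from $\gamma$; this bounds $\eta$ strictly away from $\gamma$, placing the discrepancy inside $J$.

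The main obstacle will be that the sum on the right formally ranges over the uncountable set $\{(\delta,\varepsilon) \in [\gamma,0]^2 : \delta+\varepsilon = \gamma\}$, and for $\eta < \gamma$ a single pair $(\beta,\beta')$ is captured by an entire interval of $(\delta,\varepsilon)$'s, yielding ill-defined infinite coefficients. My remedy is to reinterpret the sum modulo $J$: $a^{\vert\delta}$ lies in $J$ whenever $\delta$ falls outside the left-closure $\tilde S_a := \{\delta \in \mathbb{R}^{\leq 0} : \sup(S_a \cap (-\infty,\delta]) = \delta\}$, which is itself a well-ordered set (as one can check from the order type of $S_a$), so modulo $J$ only finitely many pairs in $\tilde S_a \times \tilde S_b$ with $\delta + \varepsilon = \gamma$ contribute a nonzero germ. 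Matching the multiplicity bookkeeping of this finite sum against the genuine Cauchy convolution of $(ab)^{\vert\gamma}$, and verifying that any overcounting at positions strictly below $\gamma$ is itself in $J$, is the delicate technical step.
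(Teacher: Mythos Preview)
The paper does not supply its own proof of this proposition: it is quoted verbatim from Berarducci's paper (as Lemma~7.5(2) there) and used as a black box. Consequently there is no in-paper argument to compare your proposal against.

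That said, a few comments on your sketch. Your diagnosis of the central difficulty is exactly right: as written, the sum ranges over an uncountable segment, and it is only \emph{modulo $J$} that it acquires meaning as a finite sum, namely over those $(\delta,\varepsilon)$ with $\delta\in\tilde S_a$ and $\varepsilon\in\tilde S_b$. This is indeed how Berarducci makes sense of the formula. Your observation that $\tilde S_a$ (the left-closure of $S_a$) is again well-ordered, so that only finitely many such pairs satisfy $\delta+\varepsilon=\gamma$, is the correct starting point.

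Where your outline is thin is precisely the step you flag as ``delicate''. Writing $(\delta_1,\varepsilon_1),\dots,(\delta_k,\varepsilon_k)$ for the contributing pairs with $\delta_1<\dots<\delta_k$, a pair $(\beta,\beta')\in S_a\times S_b$ with $\beta+\beta'=\eta\le\gamma$ is counted $N(\beta,\beta')$ times on the right, where $N(\beta,\beta')=\lvert\{i:\delta_i\in[\beta,\beta+(\gamma-\eta)]\}\rvert$. Your argument gives $N\le 1$ once $\gamma-\eta$ is below the minimum gap among the $\delta_i$; but you also need $N\ge 1$, and this is not automatic, since $\beta\in\tilde S_a$ does not by itself force $\gamma-\beta\in\tilde S_b$. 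The clean way through is to argue via the intervals determined by consecutive $\delta_i$'s: for $\eta$ close enough to $\gamma$, any $(\beta,\beta')$ with $\beta+\beta'=\eta$ must have $\beta$ close to one of the $\delta_i$ (else $\beta'$ is bounded away from every $\varepsilon_i$, forcing $\eta$ away from $\gamma$), and then that $\delta_i$ lies in the required interval. This works, but it is a genuine verification rather than a bookkeeping formality, and your write-up should make it explicit.
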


\section{Primality in $K((\mathbb{R}^{\leq0})) / J$}

Pitteloud~\cite{Pitteloud2001} proved that the series of order type
$\omega$ or $\omega+1$ and order-value $\omega$ are \emph{prime}. In
this section, we adopt the same strategy to prove that every germ
$a \in K((\mathbb{R}^{\leq 0})) / J$ of order-value $\omega$ is prime.

Following~\cite[p.~1209]{Pitteloud2001}, we introduce some additional
$K$-vector spaces.

\begin{defn}[{\cite[p.~1209]{Pitteloud2001}}]
  For $\alpha \in \on$, let $J_{\omega^{\alpha}}$ be the $K$-vector
  space
  $J_{\omega^{\alpha}} := \{ a \in K((\mathbb{R}^{\leq 0})) \,:\,
  \vj(a)<\omega^{\alpha} \}$. Moreover, we write
  $b \vert a \mod J_{\omega^\alpha}$ if there exists
  $c \in K((\mathbb{R}^{\leq 0}))$ such that
  $a \equiv bc \mod J_{\omega^\alpha}$.
\end{defn}

For instance, $J_{\omega^{0}} = J$ and $J_{\omega^1} = J + K$. Note
that $J_{\omega^\alpha}$ is just a $K$-vector space. By the
multiplicative property, one can easily verify that
$J_{\omega^\alpha}$ is closed under multiplication if and only if
$\alpha = \omega^\beta$ for some $\beta$, and it is an ideal if and
only if $\alpha = 0$.

Let $a, b, c, d \in K((\mathbb{R}^{\leq 0}))$ satisfy $ab = cd$ and
$\vj(a) = \ot(a) = \omega$.  Pitteloud proved that either $a \vert c$
or $a \vert d$ in $K((\mathbb{R}^{\leq 0}))$ (where $a \vert c$ means
$a$ divides $c$) by analysing the related equation $a^k b = c^l d$
(with $k,l > 0$). More precisely, he proved the following:

\begin{prop}[{\cite[Prop.\ 3.2]{Pitteloud2001}}]\label{prop:pitteloud}
  Let $a$, $b$, $c$, $d$ in $K((\mathbb{R}^{\leq 0}))$ be such that
  $\vj(a) = \omega$ and assume that
  $a^k b = c^l d \mod J_{\vj(a^{k}b)}$ with $k,l > 0$. Then either
  $a \vert c \mod J_{\vj(c)}$ or $a \vert d \mod J_{\vj(d)}$.
\end{prop}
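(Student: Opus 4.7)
The plan is to proceed by induction on $k + l$, with the base case $k = l = 1$ being a direct adaptation of Pitteloud's original argument, and to rely throughout on the convolution formula, on the multiplicativity of $\vj$, and on the particularly rigid structure of series whose order-value equals $\omega$. The key structural input is \prettyref{rem:vj-cnf}: the condition $\vj(a) = \omega$ forces $S_a$ to have essentially a single infinite limit point $\gamma_0$, so that $a^{\vert\gamma} \in J + K$ for every $\gamma \neq \gamma_0$, while the translated truncation $a^{\vert\gamma_0}$ carries the full $\omega$ of $\vj$-value.

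For the base case, I would expand both sides of $ab \equiv cd \mod J_{\vj(ab)}$ using the convolution formula, obtaining
\[
\sum_{\delta + \varepsilon = \gamma} a^{\vert\delta} b^{\vert\varepsilon}
\equiv
\sum_{\delta + \varepsilon = \gamma} c^{\vert\delta} d^{\vert\varepsilon}
\mod J
\]
for any $\gamma \in \mathbb{R}^{\leq 0}$, and then specialise $\gamma$ to isolate the critical translated truncation $a^{\vert\gamma_0}$ on the left. Since only $\delta = \gamma_0$ contributes a term of maximal $\vj$-value, matching against the right-hand side forces some $c^{\vert\delta'} d^{\vert\varepsilon'}$ to carry that $\omega$-value. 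Combined with the multiplicativity of $\vj$ and the minimality of $\omega$ among nontrivial order-values, this should promote to divisibility of $a$ into a translated truncation of $c$ or $d$, and hence into $c$ or $d$ modulo the correct $J_{\vj(c)}$ or $J_{\vj(d)}$.

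For the inductive step, I would peel off one factor of $a$ (or, symmetrically, one of $c$) from the equation $a^k b = c^l d$ to reduce to the $(k-1, l)$ case (respectively $(k, l-1)$). The bookkeeping is delicate: after such a reduction one must verify that the new equation still sits inside the correct $J_{\omega^\alpha}$-ideal, and that the hypothesis $\vj(a) = \omega$, which is of course unaffected, continues to apply in the reduced configuration.

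The main obstacle, I expect, is the tight error-term management. The convolution formula is only exact modulo $J$, while the hypothesis and the conclusion live modulo the much larger spaces $J_{\vj(a^k b)}$, $J_{\vj(c)}$ and $J_{\vj(d)}$. The many decompositions $\delta_1 + \ldots + \delta_k + \varepsilon = \gamma$ appearing in the convolution expansion for large $k$ threaten to produce unwanted cancellations, and controlling them requires careful use of the multiplicativity of $\vj$ to guarantee that all subleading contributions automatically lie in the ideal being quotiented out, so that the leading-order comparison yields a clean divisibility statement rather than a tangle of error terms.
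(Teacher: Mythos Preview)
The paper does not contain a proof of this proposition: it is quoted from \cite[Prop.\ 3.2]{Pitteloud2001} and invoked as a black box in the proof of \prettyref{thm:prime-in-j}. There is therefore nothing in the present paper against which to compare your attempt.

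That said, one structural claim in your sketch is stated too strongly and would derail the argument as written. The hypothesis $\vj(a)=\omega$ constrains only the \emph{tail} of $a$: it guarantees that for some $\gamma<0$ the set $S_a\cap(\gamma,0)$ is an $\omega$-sequence converging to $0$, but it imposes no restriction on $S_a\cap(-\infty,\gamma]$, which may carry arbitrarily many limit points and arbitrarily large order type. Hence the assertion that ``$a^{\vert\gamma}\in J+K$ for every $\gamma\neq\gamma_0$'' is false in general; what is true is only that $a^{\vert\gamma}\in J+K$ for all $\gamma<0$ \emph{sufficiently close to} $0$, and the unique limit point of the tail is $0$ itself, so that $\gamma_0=0$ and $a^{\vert\gamma_0}=a$. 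Your base-case plan of ``specialising $\gamma$ to isolate $a^{\vert\gamma_0}$'' then simply returns the original equation and yields no reduction. The inductive step is likewise too vague as stated: ``peeling off one factor of $a$'' from $a^kb\equiv c^ld$ does not obviously produce an equation of the same shape with smaller $k+l$, since $c^ld$ need not be divisible by $a$ in any useful sense before the conclusion has been established; in Pitteloud's argument the reduction in $k+l$ arises instead from a comparison of leading translated truncations that produces a new relation of the required form.
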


Starting from this proposition, we can prove the following result.

\begin{thm}\label{thm:prime-in-j}
  All germs in $K((\mathbb{R}^{\leq 0})) / J$ of order-value $\omega$
  are prime.
\end{thm}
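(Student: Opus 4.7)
The plan is to bootstrap \prettyref{prop:pitteloud} from its ``divides modulo $J_{\vj(c)}$'' conclusion to the ``divides modulo $J$'' conclusion required for primality of the germ $a+J$. Unwinding the definition, what must be shown is that if $\vj(a) = \omega$ and $ae \equiv cd \mod J$ for some $c,d,e \in K((\mathbb{R}^{\leq 0}))$, then either $c \equiv af \mod J$ or $d \equiv ag \mod J$ for some $f,g$. Two easy cases may be disposed of at once: if $c \in J$ or $d \in J$ we are done trivially, and if $cd \in J$ then primality of $J$ forces one of $c,d$ into $J$. So I may assume $\vj(c), \vj(d), \vj(ae) > 0$.

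Under these assumptions $ae \equiv cd \mod J \subseteq J_{\vj(ae)}$, so \prettyref{prop:pitteloud} applied with $k=l=1$ and $b=e$ yields $a \mid c \mod J_{\vj(c)}$ or $a \mid d \mod J_{\vj(d)}$. The main step is a transfinite induction on the ordinal $\nu := \vj(c) \oplus \vj(d)$ (Hessenberg's natural sum). Suppose by symmetry we are in the first case, so $c = af + r$ with $\vj(r) < \vj(c)$; substituting into $ae \equiv cd \mod J$ gives
\[
  rd \equiv a(e - fd) \mod J.
\]
Since $\vj(r) \oplus \vj(d) < \vj(c) \oplus \vj(d) = \nu$ by strict monotonicity of $\oplus$, the inductive hypothesis applied to the triple $(r,d,e-fd)$ yields either $a \mid d \mod J$ (in which case we are done) or $r \equiv ag \mod J$ for some $g$; in the latter case $c = af + r \equiv a(f+g) \mod J$, completing the argument. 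The case $a \mid d \mod J_{\vj(d)}$ is handled symmetrically.

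The main obstacle, and the reason this approach succeeds, is that \prettyref{prop:pitteloud} delivers divisibility modulo $J_{\vj(c)}$ rather than modulo some space $J_{\omega^\alpha}$ fixed in advance: this is precisely what guarantees a strict drop in the inductive parameter $\vj(c) \oplus \vj(d)$ at each step, and hence the well-foundedness of the recursion. The base case $\nu = 0$ is absorbed into the trivial disposal above, so no separate verification is needed.
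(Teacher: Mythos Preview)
Your proof is correct and follows essentially the same approach as the paper: apply \prettyref{prop:pitteloud} to get $c = af + r$ with $\vj(r) < \vj(c)$, rewrite the congruence as $a(e-fd) \equiv rd \mod J$, and recurse. The only cosmetic difference is the inductive parameter: the paper inducts on $\vj(AB) = \vj(c) \odot \vj(d)$, whereas you induct on $\vj(c) \oplus \vj(d)$; both drop strictly since $\vj(r) < \vj(c)$ and both $\oplus$ and $\odot$ are strictly monotone in each argument, so the two choices are interchangeable.
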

\begin{proof}
  Let $A = a + J$, $B = b + J$, $C = c + J$ and $D = d + J$ be
  non-zero germs of $K((\mathbb{R}^{\leq 0})) / J$ such that
  $\vj(A) = \omega$ and $AB = CD$.  We claim that $A \vert C$ or
  $A \vert D$. We work by induction on $\vj(AB)$.

  Note that $\vj(ab-cd) = \vj(AB-CD)=0$, so there exists $j \in J$
  such that $ab = cd + j$. Since
  $\vj(j) = 0 < \omega = \vj(a) \leq \vj(ab)$, we have in fact
  $ab = cd \mod J_{\vj(ab)}$. By \prettyref{prop:pitteloud}, either
  $a \vert c \mod J_{\vj(c)}$ or $a \vert d \mod J_{\vj(d)}$.

  Since $C$ and $D$ have a symmetric role, we may assume to be in the
  former case. Then there are $e,c' \in K((\mathbb{R}^{\leq 0}))$ such
  that $c = ae + c'$ with $\vj(c') < \vj(c)$. In turn, we have
  \[
    ab = cd + j = (ae + c')d + j
  \]
  and in particular
  \[
    a(b - ed) = c'd + j.
  \]
  Let $B' := (b-ed) + J$, $C' := c' + J$, $ E := e + J$. The above
  equation means that $AB' = C'D$. Now note that
  $\vj(AB') = \vj(C'D) < \vj(CD) = \vj(AB)$. Therefore, by inductive
  hypothesis, either $A \vert C'$ or $A \vert D$. In the latter case,
  we are done. In the former case, we just recall that $C = AE + C'$,
  so $A \vert C$, as desired.
\end{proof}

\begin{thm}\label{thm:ufdq}
  Every non-zero germ in $K((\mathbb{R}^{\leq 0})) / J$ of order-value
  $\leq \omega^3$ admits a unique factorisation into irreducibles.
\end{thm}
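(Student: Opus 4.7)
The plan is to separate existence and uniqueness and to exploit three ingredients: $\vj$ is multiplicative, it takes values only in $\{0\} \cup \{\omega^{\alpha} : \alpha \in \on\}$, and germs of order-value $\omega$ are prime by \prettyref{thm:prime-in-j}. The units of the integral domain $R := K((\mathbb{R}^{\leq 0}))/J$ are precisely the non-zero germs of order-value $1$ (the non-zero elements of $K + J$, whose inverses exist already in $K((\mathbb{R}^{\leq 0}))$ by the usual geometric-series formula $(k+j)^{-1} = k^{-1}\sum_{n\ge 0}(-k^{-1}j)^{n}$), so every non-zero non-unit germ $A$ has $\vj(A) = \omega^{\alpha}$ for some $\alpha \geq 1$.

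For \emph{existence}, I would induct on $\alpha$ with $\vj(A) = \omega^{\alpha} \leq \omega^{3}$. When $\alpha = 1$ the germ $A$ is prime, hence irreducible, and is its own factorisation. When $\alpha \in \{2,3\}$, either $A$ is irreducible or $A = BC$ with $B, C$ non-units; writing $\vj(B) = \omega^{\beta}$ and $\vj(C) = \omega^{\gamma}$, multiplicativity gives $\omega^{\beta\oplus\gamma} = \omega^{\alpha}$, so $\beta + \gamma = \alpha$ with $\beta, \gamma \geq 1$, whence $\beta, \gamma < \alpha$ and the inductive hypothesis factors $B$ and $C$.

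The key combinatorial observation for \emph{uniqueness}, and the exact point at which the bound $\omega^{3}$ enters, is the following: in any factorisation $A = P_{1} \cdots P_{n}$ of a germ of order-value $\leq \omega^{3}$ into $n \geq 2$ irreducible non-units, at least one $P_{i}$ has $\vj(P_{i}) = \omega$. Indeed, if every $\vj(P_{i}) = \omega^{\alpha_{i}}$ had $\alpha_{i} \geq 2$, multiplicativity would force $\vj(A) = \omega^{\alpha_{1} \oplus \cdots \oplus \alpha_{n}}$ with $\alpha_{1} \oplus \cdots \oplus \alpha_{n} \geq 2n \geq 4$, contradicting the hypothesis. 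Such a $P_{i}$ is prime by \prettyref{thm:prime-in-j}.

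Uniqueness then follows by a standard induction on $\alpha$. If $A$ is irreducible the claim is trivial; otherwise, given two factorisations $A = P_{1} \cdots P_{n} = Q_{1} \cdots Q_{m}$ with $n, m \geq 2$, I would relabel so that $\vj(P_{1}) = \omega$, invoke primality of $P_{1}$ to find $j$ with $P_{1} \mid Q_{j}$, and use irreducibility of $Q_{j}$ together with the fact that $P_{1}$ is a non-unit to conclude $Q_{j} = u P_{1}$ for a unit $u \in R$. Cancelling $P_{1}$ in the domain yields two irreducible factorisations of a germ of order-value $\omega^{\alpha-1} < \omega^{\alpha}$, to which the inductive hypothesis applies. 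The only real subtlety is the observation of the third paragraph; it is also exactly what would fail at $\omega^{4}$, where a factorisation $\omega^{2} \odot \omega^{2}$ would be allowed with no component currently known to be prime.
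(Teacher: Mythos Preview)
Your argument is essentially the same as the paper's: both reduce existence and uniqueness to the observation that in any non-trivial factorisation of a germ of order-value $\leq \omega^{3}$ at least one factor has order-value $\omega$, and then invoke \prettyref{thm:prime-in-j}. Your write-up is somewhat more explicit in separating existence from uniqueness and in spelling out the induction, but the mathematical content is identical.

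One small correction to your parenthetical: the geometric series $(k+j)^{-1} = k^{-1}\sum_{n\ge 0}(-k^{-1}j)^{n}$ does \emph{not} converge in $K((\mathbb{R}^{\leq 0}))$, since $j \in J$ has support bounded above by some $\gamma < 0$, so the supports of $j^{n}$ march off to $-\infty$ and their union is not well-ordered. The correct (and simpler) justification is that $(k+j)+J = k+J$ already in the quotient, with inverse $k^{-1}+J$; together with multiplicativity of $\vj$ this gives the characterisation of units you want. This slip is confined to the parenthetical and does not affect the rest of your proof.
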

\begin{proof}
  If $\vj(A) = \omega$, then $A$ is prime and therefore irreducible by
  \prettyref{thm:prime-in-j}.

  If $\vj(A) = \omega^2$ or $\omega^3$, then $A$ is either irreducible
  or equal to $A = BC$ with $\vj(B), \vj(C) <\vj(A)$. We assume to be
  in the latter case.  Since $\vj(B) \odot \vj(C) = \vj(A)$, we must
  have that either $\vj(B) = \omega$ or $\vj(C) = \omega$; by
  symmetry, we may assume that $\vj(B) = \omega$, and in particular
  that $B$ is prime.

  If $A$ has another factorisation into irreducibles, then $B$ must
  divide one of the factors, and in particular it must be equal to one
  of the factors up to a unit. The product of the remaining factors
  has either order-value $\omega^2$ or $\omega$; in both cases we
  repeat the argument and we are done.
\end{proof}

\section{Germ-like series}

Unfortunately, even if a series in ${K((\mathbb{R}))}^{\leq 0}$ has an
irreducible germ, it may well be reducible (see for instance
$(t^{-1} -1) (1 + \sum_{n} t^{-\frac{1}{n}})$). This implies that the
results on germs cannot be lifted automatically to all series. On the
other hand, there are some series which behave similarly enough to
germs so that the same techniques can be applied to them.

\begin{defn}\label{def:germ-like}
  We say that an $a \in K((\mathbb{R}^{\leq 0}))$ is \emph{germ-like}
  if either $\ot(a) = \vj(a)$ or $\vj(a) > 1$ and
  $\ot(a) = \vj(a) + 1$.
\end{defn}

We shall prove that if a product of non-zero series is germ-like, then
the series themselves are germ-like. For this, we recall the following
definition from~\cite{Berarducci2000}.

\begin{defn}
  Given $a \in K((\mathbb{R}^{\leq 0}))$, let
  $\alpha = \max \{\vj(a^{\vert\gamma}) : \gamma \in \mathbb{R}^{\leq
    0}\}$. The \emph{critical point} $\crit(a)$ of $a$ is the least
  $\gamma \in \mathbb{R}^{\leq 0}$ such that
  $\vj(a^{\vert\gamma}) = \alpha$.
\end{defn}

\begin{rem}\label{rem:vj-crit}
  By construction, $\vj(a^{\vert\crit(a)})$ is equal to the first term
  of the Cantor normal form of the order type of $a$.
\end{rem}

Proving that a critical point always exists is not difficult, and we
refer to~\cite[\S 10]{Berarducci2000} for the relevant details.

\begin{lem}\label{lem:germ-like-crit}
  If $a \in K((\mathbb{R}^{\leq 0}))$, $a$ is germ-like if and only if
  $\crit(a) = 0$.
\end{lem}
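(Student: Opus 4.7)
The case $a = 0$ is degenerate (taking $\crit(0) := 0$ by convention). For $a \neq 0$ with finite support, every $a^{\vert \gamma}$ also has finite support, so $\vj(a^{\vert \gamma}) \in \{0,1\}$ with value $1$ exactly when $\gamma \in S_a$; hence $\crit(a) = \min S_a$, which equals $0$ iff $S_a = \{0\}$ iff $a \in K^*$, and this is precisely the finite-support germ-like condition ($\ot(a) = \vj(a) = 1$). So the essential case is $\ot(a) \geq \omega$, which I now assume, writing $\ot(a) = \omega^{\alpha_1} + \cdots + \omega^{\alpha_n}$ in Cantor normal form. Combining \prettyref{rem:vj-crit} (giving $\max_\gamma \vj(a^{\vert \gamma}) = \omega^{\alpha_1}$) with \prettyref{rem:vj-cnf} (giving $\vj(a) = \omega^{\alpha_k}$ for the last index $k$ with $\alpha_k \geq 1$, when $a \notin J+K$), germ-like with $\ot(a) \geq \omega$ translates exactly to $\ot(a) \in \{\omega^{\alpha}, \omega^{\alpha}+1\}$ for some $\alpha \geq 1$.

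For the forward direction, if $a$ is germ-like then $\vj(a) = \omega^\alpha > 1$ forces $a \notin J+K$; this in turn forces $S_a$ minus its top point (if any) to be a set of order type $\omega^\alpha$ with supremum $0$ and accumulating there. Consequently, for any $\gamma < 0$ the intersection $S_a \cap (-\infty, \gamma]$ is a proper initial segment of this set, of order type strictly less than $\omega^\alpha$. Hence every CNF exponent of $\ot(a^{\vert \gamma})$ is $< \alpha$, and $\vj(a^{\vert \gamma}) < \omega^\alpha = \vj(a) = \vj(a^{\vert 0})$, so $\crit(a) = 0$.

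Conversely, $\crit(a) = 0$ forces $\vj(a) = \omega^{\alpha_1} = \omega^{\alpha_k}$; since the CNF exponents are weakly decreasing this yields $\alpha_1 = \cdots = \alpha_k =: \alpha \geq 1$, so $\ot(a) = \omega^\alpha \cdot k + m$ with $m = n - k$. To conclude that $a$ is germ-like, I rule out $k \geq 2$ and $m \geq 2$ by exhibiting a witness $\gamma_0 < 0$ with $\vj(a^{\vert \gamma_0}) = \omega^\alpha$. Decompose $S_a$ into consecutive blocks $T_1 < \cdots < T_k$ of order type $\omega^\alpha$ followed by isolated points $p_1 < \cdots < p_m$, let $\tau_1 := \sup T_1$, and let $\mu$ be the minimum of $S_a \setminus T_1$ (that is, $\min T_2$ if $k \geq 2$, otherwise $p_1$). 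If $\tau_1 < \mu$, take $\gamma_0 := \tau_1$: then $S_a \cap (-\infty, \tau_1] = T_1$, the shifted support $T_1 - \tau_1$ accumulates at $0$, and $\vj(a^{\vert \tau_1}) = \omega^\alpha$. If $\tau_1 = \mu$, take $\gamma_0 := \mu$: then $S_a \cap (-\infty, \mu] = T_1 \cup \{\mu\}$ has order type $\omega^\alpha + 1$ with shifted support accumulating at $0$, so again $\vj(a^{\vert \gamma_0}) = \omega^\alpha$. In every sub-case $\gamma_0 < 0$ because an $\omega^\alpha$-block (being of limit order type $\geq \omega$) or an isolated point strictly above $\gamma_0$ cannot touch $0$ from below within $\mathbb{R}^{\leq 0}$. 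This contradicts $\crit(a) = 0$, so $k = 1$ and $m \leq 1$, i.e., $a$ is germ-like. The main subtlety is verifying that in each sub-case the shifted support genuinely accumulates at $0$ — equivalently, that the block $T_1$ limits to the chosen $\gamma_0$ — so that $a^{\vert \gamma_0} \notin J+K$ and \prettyref{rem:vj-cnf} actually produces order-value $\omega^\alpha$.
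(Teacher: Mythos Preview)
Your proof is correct and follows essentially the same approach as the paper's: both handle the forward direction by bounding $\vj(a^{\vert\gamma})<\omega^\alpha$ for every $\gamma<0$, and the converse by locating a negative point (your $\gamma_0=\tau_1$ is exactly the paper's ``minimum $\gamma$ with $\ot(a^{\vert\gamma})\geq\omega^{\beta_1}$'') at which the maximal order-value is already attained. Two minor remarks: your stated biconditional ``germ-like with $\ot(a)\geq\omega$ $\Leftrightarrow$ $\ot(a)\in\{\omega^\alpha,\omega^\alpha+1\}$'' needs the extra hypothesis $a\notin J+K$ for the right-to-left direction (though you never actually invoke that direction), and your sub-case $m\geq 2$ is vacuous, since once $\crit(a)=0$ gives $\vj(a)=\omega^{\alpha_1}\geq\omega$ you have $a\notin J+K$, and then \prettyref{rem:vj-cnf} already forces $m\leq 1$.
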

\begin{proof}
  First of all, we note that if $a \in J$, then $\vj(a) = 0$; in this
  case, we just note that $\ot(a) = 0$ if and only if $a = 0$, and the
  conclusion follows trivially. Similarly, if $a \in J + K$ but
  $a \notin J$, then $\vj(a) = 1$, and clearly $\ot(a) = 1$ if and
  only if $a \in K^*$, proving again the conclusion.

  Now assume that $a \notin J + K$. Recall that in this case $\vj(a)$
  is the last infinite term of the Cantor normal form of $\ot(a)$ (in
  particular, $\vj(a) > 1$; see \prettyref{rem:vj-cnf}). Therefore,
  $\ot(a) = \vj(a)$ holds if and only if the Cantor normal form of
  $\ot(a)$ is $\omega^\alpha$ for some $\alpha \in \on$. Similarly,
  $\vj(a) = \ot(a) + 1$ holds if and only if the Cantor normal form of
  $\ot(a)$ is $\omega^{\alpha} + 1$ for some $\alpha \in \on$.

  If $\ot(a)$ is $\omega^\alpha$ or $\omega^\alpha + 1$, since
  $a \notin J+K$, we have $\ot(a^{\vert\gamma}) < \omega^{\alpha}$ for
  all $\gamma \in \mathbb{R}^{<0}$. In particular,
  $\vj(a^{\vert\gamma}) \leq \ot(a^{\vert\gamma}) < \omega^{\alpha} =
  \vj(a)$ for all $x\in \mathbb{R}^{<0}$, hence $\crit(a) = 0$.

  Otherwise, we have that the Cantor normal form of $\ot(a)$ is
  $\omega^{\beta_1} + \ldots + \omega^{\beta_k}$ or
  $\omega^{\beta_1} + \ldots + \omega^{\beta_k} + 1$, where $k > 1$
  and $\beta_k > 0$; let $\gamma \in \mathbb{R}^{\leq0}$ be the
  minimum real number such that
  $\ot(a^{\vert\gamma}) \geq \omega^{\beta_1}$. Then $\gamma$ is
  negative, and it is easy to verify that
  $\vj(a^{\vert\gamma}) = \omega^{\beta_1}$ and in fact that
  $\gamma = \crit(a)$, thus $ \crit(a) < 0$.
\end{proof}

The following lemma is inspired by~\cite[\S 10]{Berarducci2000}.

\begin{lem}\label{lem:crit-mult}
  If $b, c$ are non-zero series of $K((\mathbb{R}^{\leq 0}))$, then
  \begin{enumerate}
  \item $\crit(bc) = \crit(b) + \crit(c)$;
  \item
    $\vj({(bc)}^{\vert\crit(bc)}) = \vj(b^{\vert\crit(b)}) \odot
    \vj(c^{\vert\crit(c)})$.
  \end{enumerate}
\end{lem}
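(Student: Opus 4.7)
The plan is to set $\gamma_b := \crit(b)$, $\gamma_c := \crit(c)$, $\alpha_b := \vj(b^{\vert\gamma_b})$, $\alpha_c := \vj(c^{\vert\gamma_c})$, and prove in one sweep that $\alpha_b \odot \alpha_c$ is the maximum of $\vj((bc)^{\vert\gamma})$ over $\gamma \in \mathbb{R}^{\leq 0}$, first attained at $\gamma = \gamma_b + \gamma_c$; items~(1) and~(2) follow at once. By \prettyref{rem:vj-cnf} each of $\alpha_b$ and $\alpha_c$ is of the form $\omega^\mu$ or $0$, and by construction they are the maxima of $\vj(b^{\vert\cdot})$ and $\vj(c^{\vert\cdot})$ respectively. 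The workhorse is the convolution formula
\[
  (bc)^{\vert\gamma} \equiv \sum_{\delta + \varepsilon = \gamma} b^{\vert\delta} c^{\vert\varepsilon} \mod J,
\]
combined with the multiplicativity of $\vj$ and the ``equality-when-$\vj$-values-differ'' clause under sums.

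For the upper bound, I would note that $\vj(b^{\vert\delta}) \leq \alpha_b$ for every $\delta \leq 0$, with strict inequality when $\delta < \gamma_b$ (since $\gamma_b$ is the \emph{least} point where the maximum is attained); analogously for $c$. Because the natural product $\omega^\mu \odot \omega^\nu = \omega^{\mu \oplus \nu}$ is strictly monotone in each argument, it follows that $\vj(b^{\vert\delta}) \odot \vj(c^{\vert\varepsilon}) \leq \alpha_b \odot \alpha_c$, with strict inequality as soon as $\delta < \gamma_b$ or $\varepsilon < \gamma_c$. Applying multiplicativity and the max-bound for $\vj$ on sums to the convolution formula then yields $\vj((bc)^{\vert\gamma}) \leq \alpha_b \odot \alpha_c$ for all $\gamma$; moreover, when $\gamma < \gamma_b + \gamma_c$, every decomposition $\delta + \varepsilon = \gamma$ forces $\delta < \gamma_b$ or $\varepsilon < \gamma_c$, so in fact $\vj((bc)^{\vert\gamma}) < \alpha_b \odot \alpha_c$.

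For the matching lower bound, at $\gamma = \gamma_b + \gamma_c$ the distinguished decomposition $(\gamma_b, \gamma_c)$ contributes a summand $b^{\vert\gamma_b} c^{\vert\gamma_c}$ of $\vj$ exactly $\alpha_b \odot \alpha_c$, while every other decomposition $(\delta, \varepsilon)$ satisfies $\delta < \gamma_b$ or $\varepsilon < \gamma_c$ (as $\delta \neq \gamma_b$ together with $\delta + \varepsilon = \gamma_b + \gamma_c$ forces one of the two strict inequalities), so its contribution has strictly smaller $\vj$ by the previous paragraph. Thus precisely one summand attains the value $\alpha_b \odot \alpha_c$ while all others lie strictly below; the equality-when-distinct clause then gives $\vj((bc)^{\vert\gamma_b + \gamma_c}) = \alpha_b \odot \alpha_c$, matching the upper bound. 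Combined with the strict inequality for $\gamma < \gamma_b + \gamma_c$, this identifies $\gamma_b + \gamma_c$ as the least point of maximality of $\vj((bc)^{\vert\cdot})$, so $\crit(bc) = \gamma_b + \gamma_c$ and $\vj((bc)^{\vert\crit(bc)}) = \alpha_b \odot \alpha_c$, proving both items. The one technical delicacy, to be handled as in \cite{Berarducci2000,Pitteloud2001}, is that the convolution sum may a priori be infinite; I would address this by passing to the quotient by the $K$-subspace $J_{\omega^\mu}$ with $\omega^\mu = \alpha_b \odot \alpha_c$, in which all but the dominant summand vanish and the calculation reduces to a finite check.
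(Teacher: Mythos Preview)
Your proof is correct and follows essentially the same route as the paper's: both set $\gamma=\crit(b)$, $\delta=\crit(c)$, apply the convolution formula to bound $\vj((bc)^{\vert\varepsilon})$ by the maximum of $\vj(b^{\vert\gamma'})\odot\vj(c^{\vert\delta'})$, and then at $\varepsilon=\gamma+\delta$ isolate the single dominant summand $b^{\vert\gamma}c^{\vert\delta}$ from the strictly smaller remainder to obtain equality. Your explicit flag about the infinite convolution sum and the proposed reduction modulo $J_{\omega^\mu}$ is a reasonable elaboration of what the paper leaves implicit by pointing to \cite[Lemma~10.4]{Berarducci2000}.
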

\begin{proof}
  We proceed as in~\cite[Lemma 10.4]{Berarducci2000}. Let
  $\gamma = \crit(b)$ and $\delta = \crit(c)$. By the convolution
  formula, for any $\varepsilon \in \mathbb{R}^{\leq 0}$ we have
  \[
    {(bc)}^{\vert\varepsilon} \equiv \sum_{\gamma' + \delta' =
      \varepsilon} b^{\vert\gamma'}c^{\vert\delta'} \mod J.
  \]
  So,
  $\vj({(bc)}^{\vert\varepsilon}) \leq \mbox{max}_{\gamma' + \delta' =
    \varepsilon} \{\vj( b^{\vert\gamma'}) \odot \vj(c^{\vert\delta'})
  \}$. It follows at once that
  $\vj({(bc)}^{\vert\varepsilon}) \leq \vj(b^{\vert\gamma}) \odot
  \vj(c^{\vert\delta})$, and if the equality is attained, then for
  some $\gamma'$, $\delta'$ such that
  $\gamma' + \delta' = \varepsilon$ we have
  $\vj(b^{\vert\gamma'}) = \vj(b^{\vert\gamma})$ and
  $\vj(c^{\vert\delta'}) = \vj(c^{\vert\delta})$. In particular, if
  the equality holds, then
  $\varepsilon = \gamma' + \delta' \geq \gamma + \delta$.

  On the other hand,
  \[
    {(bc)}^{\vert\gamma + \delta} \equiv
    b^{\vert\gamma}c^{\vert\delta} +
    \sum_{\substack{\gamma' + \delta' = \gamma + \delta \\
        \gamma' < \gamma}} b^{\vert\gamma'}c^{\vert\delta'} +
    \sum_{\substack{\gamma' + \delta' = \gamma + \delta \\
        \delta' < \delta}} b^{\vert\gamma'}c^{\vert\delta'} \mod J.
  \]
  It immediately follows that
  $\vj({(bc)}^{\vert\gamma + \delta}) =
  \vj(b^{\vert\gamma}c^{\vert\delta}) = \vj(b^{\vert\gamma}) \odot
  \vj(c^{\vert\delta})$. Therefore, $\crit(bc) = \gamma + \delta$,
  proving both conclusions.
\end{proof}

\begin{cor}\label{cor:germ-like-prod}
  Let $a,b,c \in K((\mathbb{R}^{\leq 0}))$ be non-zero series with
  $a = bc$. Then $a$ is germ-like if and only if $b$ and $c$ are
  germ-like.
\end{cor}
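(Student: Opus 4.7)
The plan is to combine the two lemmas that immediately precede the corollary. By \prettyref{lem:germ-like-crit}, being germ-like is equivalent to having critical point equal to $0$. By \prettyref{lem:crit-mult}(1), the critical point is additive on products: $\crit(bc) = \crit(b) + \crit(c)$. So the corollary essentially reduces to a one-line observation, once one notes that critical points are non-positive real numbers.

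More precisely, I would argue as follows. By definition, $\crit(b), \crit(c) \in \mathbb{R}^{\leq 0}$, since the critical point is chosen from $\mathbb{R}^{\leq 0}$. Applying \prettyref{lem:crit-mult}(1), we get $\crit(a) = \crit(b) + \crit(c)$, a sum of two non-positive reals. Hence $\crit(a) = 0$ if and only if both $\crit(b) = 0$ and $\crit(c) = 0$. Invoking \prettyref{lem:germ-like-crit} on each of $a$, $b$, $c$ translates these equalities into the germ-like conditions, yielding the biconditional.

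There is no real obstacle here; the work has already been done in the two preceding lemmas. The only subtle point worth mentioning explicitly is that the proof uses the sign constraint on critical points (they lie in $\mathbb{R}^{\leq 0}$): without it, the cancellation argument $\crit(b) + \crit(c) = 0 \Rightarrow \crit(b) = \crit(c) = 0$ would fail. Everything else is a direct application of the characterisation $\text{germ-like} \iff \crit = 0$.
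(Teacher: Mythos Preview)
Your proposal is correct and matches the paper's own proof essentially verbatim: the paper invokes Lemmas~\ref{lem:germ-like-crit} and~\ref{lem:crit-mult} to reduce the statement to $\crit(a)=\crit(b)+\crit(c)$ and then observes that $\crit(a)=0$ iff $\crit(b)=\crit(c)=0$. Your additional remark that this last step relies on $\crit\in\mathbb{R}^{\leq 0}$ is a helpful clarification but not a deviation in approach.
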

\begin{proof}
  By Lemmas~\ref{lem:germ-like-crit},~\ref{lem:crit-mult}, it suffices
  to note that $\crit(a) = \crit(b) + \crit(c)$, so $\crit(a) = 0$ if
  and only if $\crit(b) = \crit(c) = 0$.
\end{proof}

\begin{cor}\label{cor:head-order-value}
  The function $a \mapsto \vj(a^{\vert\crit(a)})$ is multiplicative.
\end{cor}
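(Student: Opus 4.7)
The statement is essentially a reformulation of \prettyref{lem:crit-mult}(2), so my plan is to deduce it directly, taking care only of bookkeeping. Let me write $f \colon K((\mathbb{R}^{\leq 0})) \to \on$ for the function $f(a) := \vj(a^{\vert\crit(a)})$. The goal is to show that $f(bc) = f(b) \odot f(c)$ for all $b,c \in K((\mathbb{R}^{\leq 0}))$.

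For $b,c$ both non-zero, this is precisely the content of \prettyref{lem:crit-mult}(2), so nothing remains to check in that case. It only remains to treat the case where one of the factors is zero. Adopting the natural conventions $\crit(0) := 0$ and $0^{\vert 0} = 0$ (so that $f(0) = \vj(0) = 0$), and using that $0 \odot \alpha = 0$ in Hessenberg's natural product, both sides of the multiplicativity identity vanish when $b = 0$ or $c = 0$, and we are done.

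I would also take this opportunity to remark, via \prettyref{rem:vj-crit}, that $f(a)$ is exactly the leading (first) term $\omega^{\beta_1}$ of the Cantor normal form of $\ot(a)$, so the corollary is conceptually saying that this leading-term map is multiplicative with respect to $\odot$. This contrasts with $\vj$, which picks out the last infinite term of the Cantor normal form; both leading-term and trailing-term maps being multiplicative is the motivation mentioned at the end of the introduction for considering a second multiplicative order-value.

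There is no real obstacle here: all the substantive work — the convolution-formula computation and the case analysis for where the maximum of $\vj({(bc)}^{\vert\varepsilon})$ is attained — has already been carried out in the proof of \prettyref{lem:crit-mult}. The only point that could conceivably require care is the zero-factor boundary case, and that is immediate from the conventions above.
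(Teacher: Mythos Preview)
Your proposal is correct and follows exactly the paper's approach: the paper states the corollary with no explicit proof, treating it as an immediate consequence of \prettyref{lem:crit-mult}(2), which is precisely what you do. Your added bookkeeping for the zero case (where $\crit(0)$ is not literally defined by the paper's definition, since there is no least $\gamma\in\mathbb{R}^{\leq 0}$) and your interpretive remark via \prettyref{rem:vj-crit} are sensible additions, but the substantive content is identical.
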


\begin{thm}\label{thm:non-crit-fact}
  All non-zero germ-like series in $K((\mathbb{R}^{\leq 0}))$ admit
  factorisations into irreducibles.
\end{thm}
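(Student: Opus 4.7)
The plan is to run a transfinite induction on $\vj(a)$, with \prettyref{cor:germ-like-prod} as the crucial input: it is precisely what keeps the inductive hypothesis applicable after a reducible series is split.

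First I would pin down the possible order-values of a non-zero germ-like series. By \prettyref{rem:vj-cnf}, $\vj$ takes only the values $0$ or $\omega^\alpha$. If $\vj(a) = 0$ then $a \in J$, and combined with the germ-like condition this forces $\ot(a) = 0$, hence $a = 0$, contrary to assumption. If $\vj(a) = 1$, then the clause $\ot(a) = \vj(a) + 1$ in \prettyref{def:germ-like} is ruled out (it requires $\vj(a) > 1$), so $\ot(a) = 1$ and $a$ is a single monomial; since $a \notin J$, its exponent is $0$, so $a \in K^*$ is a unit and admits the empty factorisation into irreducibles. Consequently any germ-like non-unit has $\vj(a) \geq \omega$.

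For the inductive step, let $\vj(a) = \omega^\alpha$ with $\alpha \geq 1$. If $a$ is irreducible we are done. Otherwise $a = bc$ with $b, c$ non-units, both non-zero because $a$ is. By \prettyref{cor:germ-like-prod}, $b$ and $c$ are germ-like, so by the previous paragraph $\vj(b) = \omega^\beta$ and $\vj(c) = \omega^\gamma$ with $\beta, \gamma \geq 1$. The multiplicative property gives $\omega^{\beta \oplus \gamma} = \vj(b) \odot \vj(c) = \vj(a) = \omega^\alpha$, hence $\beta \oplus \gamma = \alpha$; strict monotonicity of the natural sum forces $\beta, \gamma < \alpha$, so $\vj(b), \vj(c) < \vj(a)$. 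The inductive hypothesis applied to $b$ and $c$ yields factorisations into irreducibles, whose concatenation factorises $a$.

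I do not anticipate a real obstacle. Well-foundedness of the ordinals under $<$ terminates the induction, \prettyref{cor:germ-like-prod} keeps us inside the germ-like class, and multiplicativity of $\vj$ combined with strict monotonicity of the natural sum delivers the required strict decrease of $\vj$. The only slightly delicate point is the base case: one must notice that a germ-like $a$ with $\vj(a) = 1$ is forced to be a constant, so that germ-like non-units genuinely live in the inductive range $\vj(a) \geq \omega$.
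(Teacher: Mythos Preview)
Your proposal is correct and follows essentially the same route as the paper: induction on $\vj(a)$, the base case $\vj(a)=1$ giving $a\in K^{*}$, and in the inductive step using \prettyref{cor:germ-like-prod} to keep the factors germ-like together with the multiplicative property of $\vj$ to force a strict drop in order-value. Your write-up is slightly more explicit (handling $\vj(a)=0$ and spelling out $\omega^{\beta}\odot\omega^{\gamma}=\omega^{\beta\oplus\gamma}$), but the argument is the same.
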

\begin{proof}
  We prove the conclusion by induction on $\vj(a)$. If $\vj(a) = 1$,
  then we must have $\ot(a) = \vj(a) = 1$, which implies that
  $a \in K$, and the conclusion follows trivially. If $\vj(a) > 1$,
  then either $a$ is irreducible, in which case we are done, or
  $a = bc$ for some $b, c \in K((\mathbb{R}^{\leq 0})) \setminus
  K$. By \prettyref{cor:germ-like-prod}, $b$ and $c$ are germ-like. If
  $\vj(b) = 1$, then by the previous argument we have $b \in K$, a
  contradiction, hence $\vj(b) > 1$; similarly, we deduce that
  $\vj(c) > 1$ as well. By the multiplicative property, it follows
  that $\vj(b), \vj(c) < \vj(a)$; by induction, $b$ and $c$ have a
  factorisation into irreducibles, and we are done.
\end{proof}

\begin{thm}\label{thm:ufdnc}
  Every non-zero germ-like series in $K((\mathbb{R}^{\leq 0}))$ of
  order-value $\leq \omega^3$ admits a unique factorisation into
  irreducibles.
\end{thm}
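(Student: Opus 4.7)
The plan is to induct on $\vj(a)$, using \prettyref{thm:non-crit-fact} for existence and relying crucially on the main theorem of \cite{Pitteloud2001}: irreducible germ-like series of order-value $\omega$ are prime in $K((\mathbb{R}^{\leq 0}))$.

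I would first establish the rigid shape of germ-like factorisations. If $b$ is a non-zero germ-like non-unit, then, using \prettyref{rem:vj-cnf}, $\ot(b) = \vj(b) = \omega^{\alpha}$ for some integer $\alpha \geq 1$: the remaining cases are $\vj(b) = 1$ (which forces $\ot(b) = 1$ and so $b \in K^{*}$) and $\ot(b) = \vj(b) + 1$ (in which case $S_b$ has a maximum, and since $b \notin J$ this maximum must be $0$, so $b$ has a non-zero constant term and is a unit). Combining this with \prettyref{cor:germ-like-prod} and the multiplicative property, any factorisation $a = b_1 \cdots b_n$ into irreducibles satisfies $\vj(a) = \omega^{\alpha_1 \oplus \cdots \oplus \alpha_n}$ with positive integers $\alpha_i$. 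Since the Hessenberg natural sum coincides with ordinary addition on natural numbers, $\vj(a) \leq \omega^3$ forces $\alpha_1 + \cdots + \alpha_n \leq 3$, and in particular $n \leq 3$.

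Given two factorisations $a = b_1 \cdots b_n = c_1 \cdots c_m$ into irreducibles, I would induct on $\vj(a) \in \{1, \omega, \omega^2, \omega^3\}$. The base case $\vj(a) = 1$ gives $a \in K^{*}$, with nothing to prove. Otherwise $n = 1$ forces $a$ irreducible and hence $m = 1$, so we may assume $n, m \geq 2$. The constraint $\sum \alpha_i \leq 3$ with $n \geq 2$ and $\alpha_i \geq 1$ forces some $\alpha_i = 1$, which we take to be $\alpha_1$ after reordering. By Pitteloud's theorem $b_1$ is prime in $K((\mathbb{R}^{\leq 0}))$, so $b_1$ divides some $c_j$; after reordering, $c_1 = u b_1$ for some $u \in K((\mathbb{R}^{\leq 0}))$. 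Irreducibility of $c_1$ and the fact that $b_1$ is a non-unit force $u$ to be a unit, so $b_1$ and $c_1$ are associates. Cancellation yields $b_2 \cdots b_n = u c_2 \cdots c_m$, which is germ-like by \prettyref{cor:germ-like-prod} and has order-value strictly less than $\vj(a)$; the inductive hypothesis matches the two sides up to reordering and units.

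The main obstacle is that Pitteloud's primality is only available at order-value $\omega$. The bound $\vj(a) \leq \omega^3$ is precisely the range in which any factorisation into at least two irreducibles must contain a prime of order-value $\omega$, since $\sum \alpha_i \leq 3$ with $n \geq 2$ and $\alpha_i \geq 1$ prevents all $\alpha_i$ from exceeding $1$; this is what enables the peeling induction. At $\vj(a) = \omega^4$ the argument already fails: one could conceivably have $a = c_1 c_2$ with $\vj(c_1) = \vj(c_2) = \omega^2$, leaving no $\vj = \omega$ factor on which to invoke Pitteloud's theorem. Extending the statement to larger order-values would therefore require new primality results for irreducibles of order-value $\omega^2$ or above, which is one of the open problems flagged in the introduction.
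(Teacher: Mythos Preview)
Your overall strategy matches the paper's proof: reduce to finding a factor of order-value $\omega$, invoke Pitteloud's primality theorem \cite[Thm.~3.3]{Pitteloud2001}, and peel off that prime factor inductively. However, there is a genuine error in your preliminary claim about the shape of germ-like non-units.

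You assert that if $b$ is germ-like with $\ot(b) = \vj(b) + 1$, then $b$ has a non-zero constant term and is therefore a unit. The first part is correct, but the second is false: the units of $K((\mathbb{R}^{\leq 0}))$ are precisely the non-zero constants $K^{*}$, not the series with non-zero constant term. (If $ab = 1$ then $\min S_{a} + \min S_{b} = 0$, forcing $\min S_{a} = \min S_{b} = 0$ and hence $a,b \in K^{*}$.) For instance, $1 + t^{-1}$ has non-zero constant term, but its inverse in the ambient field $K((\mathbb{R}))$ is $t - t^{2} + t^{3} - \cdots$, which lies outside $K((\mathbb{R}^{\leq 0}))$. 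More to the point, Berarducci's own example $1 + \sum_{n} t^{-1/n}$ is germ-like with $\ot = \omega + 1 = \vj + 1$ and is \emph{irreducible}, not a unit.

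Fortunately the error sits in a step you do not actually need. What the rest of your argument uses is only that each irreducible germ-like factor $b_{i}$ satisfies $\vj(b_{i}) = \omega^{\alpha_{i}}$ with $\alpha_{i} \geq 1$; this follows directly from $b_{i} \notin K^{*}$ (so $\vj(b_{i}) \neq 1$) together with \prettyref{rem:vj-cnf}, with no need to control $\ot(b_{i})$. Once $\vj(b_{1}) = \omega$ and $b_{1}$ is germ-like, you have $\ot(b_{1}) \in \{\omega, \omega+1\}$, and Pitteloud's theorem covers \emph{both} cases --- exactly as the paper's proof observes. So drop the false claim that $\ot(b) = \vj(b)$ for germ-like non-units, and your argument goes through and coincides with the paper's.
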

\begin{proof}
  Let $a \in K((\mathbb{R}^{\leq 0}))$ be a germ-like series of order
  value $\vj(a) \leq \omega^3$. Suppose that $a$ has a non-trivial
  factorisation $a = bc$. By \prettyref{cor:germ-like-prod}, $b$ and
  $c$ are germ-like. By the multiplicative property, and possibly by
  swapping $b$ and $c$, we may assume $\vj(b) \leq \omega$.

  If $\vj(b) = 1$, then $\ot(b) = 1$, hence $b \in K$, contradicting
  the hypothesis that the factorisation is non-trivial. It follows
  that $\vj(b) = \omega$. Since $b$ is germ-like, we have
  $\ot(b) = \omega$ or $\ot(b) = \omega + 1$, in which case $b$ is
  prime by~\cite[Thm.\ 3.3]{Pitteloud2001}. Once we have a prime
  factor, one can deduce easily that the factorisation is unique, as
  in the proof of \prettyref{thm:ufdq}.
\end{proof}

Moreover, we observe that irreducible germs do lift to irreducible
germ-like series.

\begin{prop}\label{prop:germ-like-irred}
  Let $a \in K((\mathbb{R}^{\leq 0}))$ be germ-like. If the germ
  $a + J$ of $a$ is irreducible in $K((\mathbb{R}^{\leq 0})) / J$,
  then $a$ is irreducible in $K((\mathbb{R}^{\leq 0}))$.
\end{prop}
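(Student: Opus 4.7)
The strategy is to push any factorisation of $a$ down to the quotient $K((\mathbb{R}^{\leq 0}))/J$, use the irreducibility of $a+J$ there, and then exploit the germ-like hypothesis to promote a unit of the quotient back to a unit of $K((\mathbb{R}^{\leq 0}))$. First I would observe that since $a + J$ is irreducible, it is in particular a non-zero non-unit of the quotient, so $a \notin J$ and $\vj(a) > 1$; pulling this back, $a$ is non-zero in $K((\mathbb{R}^{\leq 0}))$ and, since any ring unit would give a quotient unit, $a$ is not a unit of $K((\mathbb{R}^{\leq 0}))$ either.

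Next, I would identify the units of $K((\mathbb{R}^{\leq 0}))/J$. If $A(A')^{-1} = 1$ in the quotient, multiplicativity of $\vj$ forces $\vj(A)\odot\vj(A') = \vj(1+J) = 1$, hence $\vj(A) = 1$. Conversely, if $\vj(A) = 1$, then $A = k + J$ for some $k \in K^{*}$, which is invertible with inverse $k^{-1} + J$. Thus the units of the quotient are precisely the germs of order-value $1$.

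Now suppose $a = bc$ in $K((\mathbb{R}^{\leq 0}))$. Passing to the quotient, $a + J = (b+J)(c+J)$, and by irreducibility of $a+J$ we may assume (swapping $b,c$ if necessary) that $b + J$ is a unit, i.e.\ $\vj(b) = 1$. By \prettyref{cor:germ-like-prod}, both $b$ and $c$ are germ-like. The germ-like condition with $\vj(b) = 1$ rules out the second clause of \prettyref{def:germ-like} (which requires $\vj(b) > 1$), so $\ot(b) = \vj(b) = 1$. Hence the support of $b$ is a singleton, say $b = k t^{\gamma}$ with $k \in K^{*}$ and $\gamma \in \mathbb{R}^{\leq 0}$. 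But if $\gamma < 0$, then $b \in J$, giving $\vj(b) = 0$, a contradiction; so $\gamma = 0$ and $b \in K^{*}$, which is a unit of $K((\mathbb{R}^{\leq 0}))$. Thus every factorisation of $a$ is trivial, and $a$ is irreducible.

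There is no real obstacle here: the proof is essentially the interplay between \prettyref{cor:germ-like-prod} and the description of the units of the quotient via $\vj$. The only thing to be careful about is the definition of germ-like, which is arranged precisely so that no non-constant monomial is germ-like of order-value $1$, allowing the lift from quotient-units to ring-units to go through.
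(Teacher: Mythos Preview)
Your proof is correct and follows essentially the same route as the paper's: pass the factorisation to the quotient, use irreducibility there to get $\vj(b)=1$, invoke \prettyref{cor:germ-like-prod} to see that $b$ is germ-like, and conclude $\ot(b)=1$ so $b\in K^{*}$. Your write-up is simply more explicit than the paper's (you spell out the characterisation of units in the quotient and the exclusion of the case $b=kt^{\gamma}$ with $\gamma<0$), but the argument is the same.
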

\begin{proof}
  Suppose that $a = bc$ and that $a + J$ is irreducible. It
  immediately follows that one of $b + J$ or $c+J$ is a unit in
  $K((\mathbb{R}^{\leq 0}))$, say $b + J$. By the multiplicative
  property, we have that $\vj(b) = 1$. Since $b$ is germ-like it
  follows that $\ot(b) = 1$, which implies that $b \in K^*$, hence
  that $b$ is a unit in $K((\mathbb{R}^{\leq 0}))$, as desired.
\end{proof}

\begin{rem}
  In general, the converse does not hold. Indeed, let $a, b$ be two
  germ-like series of order-value $\omega$. By~\cite[Cor.\
  3.4]{Pommersheim2005}, for all $\gamma \in \mathbb{R}^{<0}$ except
  at most countably many, $ab + t^{\gamma}$ is irreducible, while of
  course its germ $ab + t^{\gamma} + J = ab + J = (a + J)(b + J)$ is
  reducible.
\end{rem}

\section{Irreducibles of order-value $\omega^3$}\label{val3}

We conclude by showing that there are several irreducible elements in
both $K((\mathbb{R}^{\leq 0}))$ and $K((\mathbb{R}^{\leq 0})) / J$ of
order-value $\omega^3$.  We follow a strategy similar to the one of
\cite{Pommersheim2005}.

\begin{lem}\label{lem:small-gamma}
  Let $a, b \in K((\mathbb{R}^{\leq 0}))$. If
  $a \equiv b \mod J_{\omega^{\alpha + 1}}$, then for all
  $\gamma \in \mathbb{R}^{< 0}$ sufficiently close to $0$ we have
  $a^{\vert\gamma} \equiv b^{\vert\gamma} \mod J_{\omega^\alpha}$.
\end{lem}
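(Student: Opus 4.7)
The plan is to replace $c := a-b$ by a minimal-order-type representative $c'$ of its class modulo $J+K$, discard the $J$- and $K$-parts which become negligible after translated truncation, and then use the fact that a proper initial segment of a well-order of limit type $\omega^\alpha$ has order type $< \omega^\alpha$.

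By \prettyref{rem:vj-cnf}, $\vj$ takes only values $0$ or of the form $\omega^\beta$, so the hypothesis $a \equiv b \mod J_{\omega^{\alpha+1}}$ is equivalent to $\vj(c) \leq \omega^\alpha$. Since truncation and multiplication by $t^{-\gamma}$ are $K$-linear, $c^{\vert\gamma} = a^{\vert\gamma} - b^{\vert\gamma}$, so it suffices to prove $\vj(c^{\vert\gamma}) < \omega^\alpha$ for $\gamma$ sufficiently close to $0$. By the definition of $\vj$, pick $c' \equiv c \mod J+K$ with $\ot(c') = \vj(c) \leq \omega^\alpha$, and write $c = c' + k + j$ for some $k \in K$ and $j \in J$. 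For $\gamma<0$ one has $k^{\vert\gamma} = 0$, and for $\gamma$ strictly above $\sup S_{j}$ the series $j^{\vert\gamma} = t^{-\gamma} j$ has support bounded by $\sup S_{j} - \gamma < 0$, hence lies in $J$. Therefore $c^{\vert\gamma} \equiv (c')^{\vert\gamma} \mod J$, and $\vj(c^{\vert\gamma}) = \vj((c')^{\vert\gamma})$.

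It remains to bound $\vj((c')^{\vert\gamma})$. If $c' \in J+K$, then the same decomposition argument applied to the $K$- and $J$-components of $c'$ yields $(c')^{\vert\gamma} \in J$ for $\gamma$ close enough to $0$, and hence $\vj = 0$. Otherwise $c' \notin J+K$; any series of finite order type is a polynomial and thus lies in $J+K$, so $\omega^\alpha = \ot(c') \geq \omega$, making $\omega^\alpha$ a limit ordinal. Since $c' \notin J$ we have $\sup S_{c'} = 0$, and since the well-order $\omega^\alpha$ has no maximum, $0 \notin S_{c'}$ and $S_{c'}$ is cofinal in $(-\infty,0)$. Consequently, for every $\gamma < 0$ the truncation $(c')_{\vert\gamma}$ is a proper initial segment of the support of $c'$, so $\ot((c')^{\vert\gamma}) = \ot((c')_{\vert\gamma}) < \omega^\alpha$, and using $\vj \leq \ot$ we obtain $\vj((c')^{\vert\gamma}) < \omega^\alpha$.

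The main step is the final one: once the $J+K$-part has been stripped off, one exploits that $S_{c'}$ is cofinal in $(-\infty,0)$ and lacks a maximum (because $\omega^\alpha$ is a limit ordinal) to force the strict drop $\ot((c')_{\vert\gamma}) < \omega^\alpha$ for every $\gamma<0$; the rest is linearity and routine bookkeeping about supports.
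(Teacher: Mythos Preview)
Your proof is correct and takes essentially the same approach as the paper's: set $c=a-b$, isolate the ``tail'' (your minimal representative $c'$), and use that a proper initial segment of a well-order of limit type $\omega^{\beta}$ has order type strictly below $\omega^{\beta}$. One minor slip: you write ``$\omega^\alpha = \ot(c')$'' where you only know $\ot(c') = \vj(c) \leq \omega^\alpha$; this does not affect the argument, since $\vj(c)$ is itself a power of $\omega$ that is at least $\omega$, whence $\ot((c')_{\vert\gamma}) < \ot(c') \leq \omega^\alpha$ as needed.
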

\begin{proof}
  Let $c = a-b$, so that $\vj(c) < \omega^{\alpha + 1}$. Note that for
  all $\gamma < 0$ sufficiently close to $0$ we have
  $\vj(c^{\vert\gamma}) < \vj(c)$, because then the tail of
  $c^{\vert\gamma}$ is a proper (translated) truncation of the tail of
  $c$. In particular,
  $\vj(c^{\vert\gamma}) < \omega^{\alpha} = \vj(c)$. Since
  $a^{\vert\gamma} - b^{\vert\gamma} = c^{\vert\gamma}$, we then have
  $a^{\vert\gamma} \equiv b^{\vert\gamma} \mod J_{\omega^\alpha}$.
\end{proof}

Given $a \in K((\mathbb{R}^{\leq 0}))$ and
$\gamma \in \mathbb{R}^{< 0}$, we let $V_\gamma(a)$ be the $K$-linear
space generated by all the germs of $a$ between $\gamma$ and $0$,
modulo $J + K$; formally,
\[
  V_\gamma(a) := \mathrm{span}_K \{a^{\vert\delta} + J + K \,:\,
  \gamma < \delta < 0 \} \subseteq K((\mathbb{R}^{\leq 0})) / (J + K).
\]
In particular, $V_{\gamma}(a)$ contains all the germs
$a^{\vert\delta} + J$, for $\gamma < \delta < 0$, modulo the subspace
$J + K$.

Note that the spaces $V_{\gamma}(a)$ form a directed system, as
clearly $V_{\gamma}(a) \supseteq V_{\gamma'}(a)$ for
$\gamma < \gamma'$. We let $V(a)$ be their intersection for
$\gamma \in \mathbb{R}^{<0}$:
\[
  V(a) := \bigcap_{\gamma<0} V_\gamma(a).
\]
The spaces $V(a)$ contains, for instance, the germs $b + J$ (modulo
$J + K$) that appear repeatedly as $b + J = a^{\vert\delta} + J$ for
$\delta$ approaching $0$.

\begin{rem}\label{rem:V-germ-invariant}
  If $a \equiv b \mod J$, then $V_\gamma(a) = V_\gamma(b)$ for all
  $\gamma < 0$ sufficiently close to $0$, so in fact $V(a) = V(b)$. In
  particular, it is well defined to write $V(a + J) := V(a)$ for any
  given germ $a + J$.
\end{rem}

\begin{prop}\label{prop:dim-V-leq-2}
  Let $a, b, c \in K((\mathbb{R}^{\leq 0}))$ be such that
  $a \equiv bc \mod J_{\omega^2}$ and $\vj(b) = \vj(c) = \omega$. Then
  $\dim(V(a)) \leq 2$.
\end{prop}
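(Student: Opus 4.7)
The plan is to prove that $V(a) \subseteq \mathrm{span}_K(b + J + K,\, c + J + K)$, which immediately yields $\dim V(a) \leq 2$. I would achieve this by showing that for every $\gamma < 0$ sufficiently close to $0$, the coset $a^{|\gamma} + J + K$ lies in this two-dimensional $K$-subspace; since $V(a) \subseteq V_\gamma(a)$ for such $\gamma$, the intersection $V(a)$ is then contained in the same span.

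The first step is to apply \prettyref{lem:small-gamma} with $\alpha = 1$: for $\gamma$ close to $0$, $a^{|\gamma} \equiv (bc)^{|\gamma} \pmod{J + K}$. The convolution formula then gives $(bc)^{|\gamma} \equiv \sum_{\delta + \varepsilon = \gamma} b^{|\delta} c^{|\varepsilon} \pmod{J}$, a finite sum indexed by pairs $(\delta, \varepsilon) \in S_b \times S_c$. I would split this into \emph{boundary} pairs $(0, \gamma)$ and $(\gamma, 0)$ (present when the appropriate coordinates belong to the respective supports) and \emph{interior} pairs with both $\delta, \varepsilon \in (\gamma, 0)$.

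The crucial step is a \emph{niceness} claim: for $\delta$ sufficiently close to $0$, $b^{|\delta} \equiv b_\delta \pmod{J}$, where $b_\delta \in K$ is the coefficient of $t^\delta$ in $b$ (understood as $0$ if $\delta \notin S_b$). Indeed, since $\vj(b) = \omega$ and $b \notin J + K$, the last Cantor normal form term of $\ot(b)$ is $\omega$, realised by an $\omega$-subsequence of $S_b$ accumulating at $0$, while all earlier CNF chunks of $\ot(b)$ lie in $(-\infty, s_1^b]$ for some $s_1^b < 0$ (with the convention $s_1^b = -\infty$ if $b$ is germ-like). For $\delta \in (s_1^b, 0)$, translating $b_{|\delta}$ by $-\delta$ shifts these earlier chunks into support bounded above by $s_1^b - \delta < 0$, placing them in $J$; the remaining finitely many elements of the final $\omega$-chunk that lie $\leq \delta$ produce a finite-support tail whose only coefficient surviving modulo $J$ is $b_\delta$ at position $0$. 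The analogous statement holds for $c^{|\varepsilon}$.

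With niceness in hand, each interior term $b^{|\delta} c^{|\varepsilon}$ reduces to the scalar $b_\delta c_\varepsilon \in K$ modulo $J$ and therefore vanishes modulo $J + K$; the boundary term $b \cdot c^{|\gamma}$ from $(0, \gamma)$ becomes $c_\gamma \, b \pmod{J}$ upon applying niceness to $c^{|\gamma}$ and using that $J$ is an ideal, and symmetrically $b^{|\gamma} \cdot c \equiv b_\gamma \, c \pmod{J}$. Combining, $(bc)^{|\gamma} \equiv \lambda_\gamma \, b + \mu_\gamma \, c \pmod{J + K}$ for suitable $\lambda_\gamma, \mu_\gamma \in K$, which gives the required inclusion. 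The main obstacle is the niceness step: it requires translating the CNF decomposition of $\ot(b)$ into precise information about how $S_b \subset \mathbb{R}$ accumulates, and then checking that the translation by $t^{-\delta}$ cleanly separates the earlier CNF chunks (which land in $J$) from the finite-support tail (which reduces to the constant $b_\delta$).
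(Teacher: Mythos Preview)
Your proposal is correct and follows essentially the same route as the paper: apply \prettyref{lem:small-gamma}, invoke the convolution formula, observe that for $\delta,\varepsilon\in(\gamma,0)$ the factors $b^{|\delta},c^{|\varepsilon}$ lie in $J+K$, and conclude that $a^{|\gamma}$ is congruent modulo $J+K$ to a $K$-linear combination of $b$ and $c$. Your ``niceness'' claim $b^{|\delta}\equiv b_\delta\pmod J$ is just a sharper phrasing of the paper's decomposition $b^{|\gamma}=k_b+j_b$ (indeed $k_b=b_\gamma$), so the two arguments are the same; the only cosmetic slip is the parenthetical that the convolution sum is indexed by $S_b\times S_c$, which is not how the formula is stated, but this plays no role in your actual computation.
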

\begin{proof}
  By \prettyref{lem:small-gamma}, for all $\gamma < 0$ sufficiently
  close to $0$ we have
  $a^{\vert\gamma} \equiv {(bc)}^{\vert\gamma} \mod J_\omega = J +
  K$. By the convolution formula,
  ${(bc)}^{\vert\gamma} \equiv \sum_{\delta + \varepsilon = \gamma}
  b^{\vert\delta} c^{\vert\varepsilon} \mod J$.

  Note that when $\delta < 0$ is sufficiently close to $0$, we have
  $b^{\vert\delta}, c^{\vert\delta} \in J + K$. This in particular
  implies that if $\gamma$ is sufficiently close to $0$, then
  ${(bc)}^{\vert\gamma} \equiv b^{\vert\gamma}c + bc^{\vert\gamma}
  \mod J_{\omega} = J + K$. Moreover, we must have
  $b^{\vert\gamma}, c^{\vert\gamma} \in J + K$; in other words,
  $b^{\vert\gamma} = k_{b} + j_{b}$, $c^{\vert\gamma} = k_{c} + j_{c}$
  for some $k_{b}, k_{c} \in K$ and $j_{b}, j_{c} \in J$. It follows
  that when $\gamma < 0$ is sufficiently close to 0, we have
  \[
    a^{\gamma} \equiv k_{b}c + k_{c}b + j_{b}c + j_{c}b \equiv k_{b}c
    + k_{c}b \mod J + K.
  \]
  Therefore, $V(a)$ is generated as $K$-vector space by $b + J + K$
  and $c + J + K$, hence $\dim(V(a)) \leq 2$.
\end{proof}

In order to find a sufficient criterion for irreducibility of series
of order-value $\omega^3$, we picture a series
$a \in K((\mathbb{R}^{\leq 0}))$ of order-value $\omega^{\alpha + 1}$
as if it were a series of order-value $\omega$ with coefficients that
are themselves series of order-value $\omega^\alpha$. In other words,
we describe $a$ as the sum of $\omega$ series of order-value
$\omega^\alpha$.

\begin{defn}
  Let $\alpha \in \on$ and $a \in K((\mathbb{R}^{\leq 0}))$ be such
  that $\vj(a) = \omega^{\alpha + 1}$. We say that $\gamma \in S_a$ is
  a \emph{big point} of $a$ if $\vj(a^{\vert\gamma}) = \omega^\alpha$.
\end{defn}

\begin{rem}
  By construction, the big points of a series must accumulate to $0$.
\end{rem}

We can use big points to give the following sufficient criterion for
the irreducibility of a series in $K((\mathbb{R}^{\leq 0}))$ of
order-value $\omega^3$.

\begin{prop}\label{prop:dim-V-r-s-leq-2}
  Let $a, b, c \in K((\mathbb{R}^{\leq 0}))$ be such that
  $a \equiv bc \mod J$, $\vj(b) = \omega$ and $\vj(c) = \omega^2$. Let
  $\gamma, \delta$ be two big points of $a$ sufficiently close to
  $0$. Then there exist $r, s \in K$, not both zero, such that
  $\dim(V(ra^{\vert\gamma} + sa^{\vert\delta})) \leq 2$.
\end{prop}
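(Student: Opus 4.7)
My plan is to mimic the argument of \prettyref{prop:dim-V-leq-2}, one level up, by expressing $r a^{\vert\gamma} + s a^{\vert\delta}$ modulo $J_{\omega^2}$ as a product of two series each of order-value $\omega$, and then applying \prettyref{prop:dim-V-leq-2} itself.

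The first step is to establish the structural identity
\[
  a^{\vert\mu} \equiv b\, c^{\vert\mu} + b_\mu\, c \mod J_{\omega^2}
\]
for $\mu$ sufficiently close to $0$, where $b_\mu$ denotes the coefficient of $t^\mu$ in $b$. Since $a - bc \in J \subseteq J_{\omega^3}$, \prettyref{lem:small-gamma} with $\alpha = 2$ gives $a^{\vert\mu} \equiv (bc)^{\vert\mu} \mod J_{\omega^2}$ for $\mu$ close to $0$. For the right-hand side I would split $b = b_{\vert\mu} + b_{>\mu}$ and $c = c_{\vert\mu} + c_{>\mu}$ and expand; the three cross-terms involving $b_{\vert\mu}$ or $c_{\vert\mu}$ have support bounded strictly below $0$ after the translation by $t^{-\mu}$ and hence lie in $J$, leaving
\[
  (bc)^{\vert\mu} \equiv b^{\vert\mu}\, c + b\, c^{\vert\mu} + (b_{>\mu} c_{>\mu})^{\vert\mu} \mod J.
\]
Using $b^{\vert\mu} \equiv b_\mu \mod J$ (valid for $\mu$ close to $0$, since then $b^{\vert\mu} \in J + K$ with constant term $b_\mu$), the identity above reduces to the bound $(b_{>\mu} c_{>\mu})^{\vert\mu} \in J_{\omega^2}$.

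The hard part of the whole proof is this bound. My intended approach is to invoke the critical-point machinery of \prettyref{lem:crit-mult}: for $\mu$ close enough to $0$, the critical point $\crit(b_{>\mu}) + \crit(c_{>\mu})$ of $b_{>\mu} c_{>\mu}$ is itself close to $0$ and strictly greater than $\mu$, so the germ of $b_{>\mu} c_{>\mu}$ taken at the strictly smaller point $\mu$ has order-value strictly less than the maximum $\omega^3$, hence at most $\omega^2$. Promoting this to the strict inequality $< \omega^2$ requires a finer analysis, extending the $(J+K)(J+K) \subseteq J+K$ argument used inside the proof of \prettyref{prop:dim-V-leq-2}: here in each inner convolution term one factor lies in $J+K$ and the other in $J_{\omega^2}$, so each individual cross-term already lies in $J_{\omega^2}$, and a closer look at the shape of $b$'s tail near $0$ should ensure that the aggregated sum does not escape $J_{\omega^2}$ either. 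I expect this to be the main technical obstacle.

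Granting the structural identity, the rest of the proof is formal. Applying it at both $\gamma$ and $\delta$ gives
\[
  r a^{\vert\gamma} + s a^{\vert\delta} \equiv b\,(r c^{\vert\gamma} + s c^{\vert\delta}) + (r b_\gamma + s b_\delta)\, c \mod J_{\omega^2}.
\]
I would choose $(r, s) \in K^2 \setminus \{(0,0)\}$ with $r b_\gamma + s b_\delta = 0$: take $(r, s) = (b_\delta, -b_\gamma)$ if $(b_\gamma, b_\delta) \neq (0, 0)$, and $(r, s) = (1, 0)$ otherwise. Setting $c' := r c^{\vert\gamma} + s c^{\vert\delta}$, we have $r a^{\vert\gamma} + s a^{\vert\delta} \equiv b\, c' \mod J_{\omega^2}$, and since $\vj(c^{\vert\mu}) \leq \omega$ for $\mu < 0$ close to $0$, also $\vj(c') \leq \omega$. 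If $\vj(c') = \omega$, \prettyref{prop:dim-V-leq-2} applied to $r a^{\vert\gamma} + s a^{\vert\delta} \equiv b c' \mod J_{\omega^2}$ with $\vj(b) = \vj(c') = \omega$ yields $\dim(V(r a^{\vert\gamma} + s a^{\vert\delta})) \leq 2$. If instead $\vj(c') < \omega$, then $\vj(r a^{\vert\gamma} + s a^{\vert\delta}) < \omega^2$, and \prettyref{lem:small-gamma} with $\alpha = 1$ implies that every sufficiently close germ of this series lies in $J + K$, so $V(r a^{\vert\gamma} + s a^{\vert\delta}) = 0$. In either case $\dim(V(r a^{\vert\gamma} + s a^{\vert\delta})) \leq 2$.
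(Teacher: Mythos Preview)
Your overall strategy coincides with the paper's: establish the structural identity
\(
  a^{\vert\mu} \equiv b^{\vert\mu}c + b\,c^{\vert\mu} \pmod{J_{\omega^2}}
\)
for $\mu$ close to $0$, then pick $(r,s)$ so as to kill the $c$-term (the paper takes $r=b^{\vert\delta}$, $s=-b^{\vert\gamma}$, which modulo $J$ are exactly your scalars $b_\delta,-b_\gamma$), and finally invoke \prettyref{prop:dim-V-leq-2}. Your concluding case split on $\vj(c')$ is in fact more careful than the paper, which silently applies \prettyref{prop:dim-V-leq-2} without isolating the degenerate case.

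Where you diverge is in the derivation of the structural identity. The paper obtains it in one line from the convolution formula, exactly as in the proof of \prettyref{prop:dim-V-leq-2} but one level up: for $\mu$ close to $0$ every interior term $b^{\vert\delta'}c^{\vert\varepsilon'}$ with $\delta',\varepsilon'\in(\mu,0)$ has $\vj(b^{\vert\delta'})\leq 1$ and $\vj(c^{\vert\varepsilon'})\leq\omega$, hence order-value $\leq\omega$; since the convolution sum is finite modulo $J$, the aggregate of these interior terms lies in $J_{\omega^2}$. Your detour through the explicit remainder $(b_{>\mu}c_{>\mu})^{\vert\mu}$ recovers exactly this interior sum, so you are not facing a genuinely new obstacle --- the ``hard part'' you flag dissolves once you use the convolution formula directly. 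By contrast, your critical-point argument as stated only yields $\vj\bigl((b_{>\mu}c_{>\mu})^{\vert\mu}\bigr)\leq\omega^2$ rather than $<\omega^2$ (since $\crit(b_{>\mu})=\crit(c_{>\mu})=0$ for $\mu$ near $0$, the critical point of the product is $0$, and being strictly left of the critical point only gives a strict inequality against the \emph{maximal} germ value $\omega^3$), leaving the gap you acknowledge. Replace that step with the convolution-formula argument and your proof is complete and essentially identical to the paper's.
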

\begin{proof}
  By the convolution formula, when $\gamma, \delta$ are sufficiently
  close to $0$ we have
  \[
    a^{\vert\gamma} \equiv b^{\vert\gamma}c + bc^{\vert\gamma} \mod
    J_{\omega^2},\quad a^{\vert\delta} \equiv b^{\vert\delta}c +
    bc^{\vert\delta} \mod J_{\omega^2}.
  \]
  If $b^{\vert\gamma} = b^{\vert\delta} = 0$, then we can take
  $r = 1, s = 0$ and apply \prettyref{prop:dim-V-leq-2} to obtain the
  conclusion. Otherwise, we have
  \[
    b^{\vert\delta}a^{\vert\gamma} - b^{\vert\gamma}a^{\vert\delta}
    \equiv b(b^{\vert\delta}c^{\vert\gamma} -
    b^{\vert\gamma}c^{\vert\delta}) \mod J_{\omega^2}.
  \]
  Let $r := b^{\vert\delta}$, $s := -b^{\vert\gamma}$.  By the
  previous equation and \prettyref{prop:dim-V-leq-2} we get
  $V(ra^{\vert\gamma} + sa^{\vert\delta}) \leq 2$, as desired.
\end{proof}

It is now not difficult to construct several series
$a \in K((\mathbb{R}^{\leq 0}))$ of order-value $\omega^3$ such that
the condition
\[
  \dim(V(ra^{\vert\gamma}+sa^{\vert\delta})) > 2
\]
is satisfied for all $r, s \in K$ not both zero and for all distinct
big points $\gamma \neq \delta$ of $a$. In particular, their
corresponding germs $a + J$ are all irreducible. Indeed, we observe
the following.

\begin{lem}\label{lem:construct-series}
  Let $(a_i \in K((\mathbb{R}^{\leq 0})) \,:\, i \in \mathbb{N})$ be a
  sequence of series of order type $\omega^\alpha$, and let
  $(\gamma_i \in \mathbb{R}^{<0} \,:\, i \in \mathbb{N})$ be a
  strictly increasing sequence of negative real numbers such that
  $\lim_{i \to \infty} \gamma_{i} = 0$. Then there exists
  $a \in K((\mathbb{R}^{\leq 0}))$ with $\vj(a) = \omega^{\alpha+1}$
  whose big points are the exponents $\gamma_{i}$ and such that
  $a^{\vert\gamma_i} \equiv a_i \mod J$ for all $i \in \mathbb{N}$.
\end{lem}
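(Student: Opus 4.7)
The plan is to construct $a$ as the formal sum $\sum_{i} t^{\gamma_{i}} a_{i}'$, where each $a_{i}'$ is a truncation of $a_{i}$ chosen so that the shifted supports $\mathrm{supp}(t^{\gamma_{i}} a_{i}')$ fit into pairwise disjoint subintervals of $\mathbb{R}^{<0}$. Concretely, I would fix any $\gamma_0 \in \mathbb{R}^{<0}$ with $\gamma_0 < \gamma_1$ and set
\[
  a_i' := a_i - (a_i)_{\vert \gamma_{i-1} - \gamma_i},
\]
so that $\mathrm{supp}(a_i') \subseteq (\gamma_{i-1} - \gamma_i, 0]$ and hence $\mathrm{supp}(t^{\gamma_i} a_i') \subseteq (\gamma_{i-1}, \gamma_i]$. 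The discarded head $(a_i)_{\vert \gamma_{i-1} - \gamma_i}$ lies in $J$ because $\gamma_{i-1} - \gamma_i < 0$, so $a_i' \equiv a_i \mod J$; and since $\omega^\alpha$ is additively indecomposable, removing a proper initial segment of the support does not change the order type, so $\ot(a_i') = \omega^\alpha$ as well.

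Next I would verify that $a := \sum_{i} t^{\gamma_i} a_i'$ is a genuine element of $K((\mathbb{R}^{\leq 0}))$: because the intervals $(\gamma_{i-1}, \gamma_i]$ are pairwise disjoint, every coefficient of $a$ comes from a single block, and the total support is well-ordered since any non-empty $T \subseteq S_a$ meets the smallest-indexed block it intersects and attains its minimum inside that well-ordered block. Truncating $a$ at $\gamma_i$ collects exactly the blocks for $j \leq i$, giving
\[
  a^{\vert \gamma_i} = a_i' + \sum_{j < i} t^{\gamma_j - \gamma_i} a_j',
\]
and each summand in the finite sum lies in $J$ (its leading factor $t^{\gamma_j - \gamma_i}$ has negative exponent), hence $a^{\vert \gamma_i} \equiv a_i \mod J$. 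For the order-value, the support of $a$ decomposes as the ordered concatenation of $\omega$ blocks of order type $\omega^\alpha$, so $\ot(a) = \omega^\alpha \cdot \omega = \omega^{\alpha+1}$, and \prettyref{rem:vj-cnf} yields $\vj(a) = \omega^{\alpha+1}$.

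Finally, each $\gamma_i$ is a big point because $\vj(a^{\vert \gamma_i}) = \vj(a_i) = \omega^\alpha$; and for any $\gamma \in S_a$ with $\gamma_{i-1} < \gamma < \gamma_i$, the analogous truncation calculation produces $a^{\vert \gamma} \equiv (a_i')^{\vert \gamma - \gamma_i} \mod J$. Since $\ot(a_i') = \vj(a_i') = \omega^\alpha$, the series $a_i'$ is germ-like, so by \prettyref{lem:germ-like-crit} its critical point is $0$, which means $\vj((a_i')^{\vert \delta}) < \omega^\alpha$ for every $\delta < 0$; in particular $\gamma$ is not a big point. The only real subtlety is the bookkeeping needed to confirm that the chosen truncations genuinely isolate the shifted supports into disjoint intervals without altering the germ or the order type; every other verification then reduces to routine manipulation of truncations and translations.
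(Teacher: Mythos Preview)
Your proposal is correct and follows essentially the same construction as the paper's own proof: truncate each $a_i$ so that the shifted supports $S_{t^{\gamma_i}a_i'}$ lie in pairwise disjoint intervals $(\gamma_{i-1},\gamma_i]$, then take $a=\sum_i t^{\gamma_i}a_i'$. Your write-up is in fact more thorough than the paper's (you explicitly verify well-ordering, compute $\ot(a)=\omega^{\alpha+1}$, and check that points strictly between consecutive $\gamma_i$'s are not big, all of which the paper leaves to the reader); the only cosmetic slip is that the given sequence $(\gamma_i)_{i\in\mathbb{N}}$ already contains a $\gamma_0$, so your auxiliary left endpoint should be named $\gamma_{-1}$ rather than $\gamma_0$.
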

\begin{proof}
  First of all, we may assume that
  $S_{a_{i + 1}} > \gamma_i - \gamma_{i + 1}$ for all
  $i \in \mathbb{N}$. Indeed, if this is not the case, it suffices to
  replace $a_{i + 1}$ with the series
  $a_{i + 1} - {(a_{i + 1})}_{\vert\gamma_i - \gamma_{i + 1}} \equiv a_{i
    + 1} \mod J$. In particular, by construction, the support of the
  series $t^{\gamma_{i}}a_{i}$ do not overlap. This implies that the
  following infinite sum is well defined:
  \[
    a := \sum_{i \in \mathbb{N}} t^{\gamma_i}a_i.
  \]

  Since $S_{t^{\gamma_{i + 1}}a_{i + 1}} > \gamma_i$, we then have
  \[
    a^{\vert\gamma_i} = \sum_{j \leq i} t^{\gamma_j - \gamma_i} a_j \equiv
    a_i \mod J,
  \]
  and the exponents $\gamma_{i}$ are exactly the big points of $a$.
\end{proof}

\begin{thm}\label{thm:irred-germ-w-3}
  There exist irreducible germs in $K((\mathbb{R}^{\leq 0})) / J$ of
  order-value $\omega^3$.
\end{thm}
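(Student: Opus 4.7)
The plan is to apply \prettyref{prop:dim-V-r-s-leq-2} in its contrapositive form. Suppose $a \in K((\mathbb{R}^{\leq 0}))$ has $\vj(a) = \omega^3$ and its germ $a + J$ admits a non-trivial factorisation $a + J = (b + J)(c + J)$. By the multiplicative property, $\vj(b) \odot \vj(c) = \omega^3$, and since neither factor is a unit we must have, up to swapping, $\vj(b) = \omega$ and $\vj(c) = \omega^2$. Then \prettyref{prop:dim-V-r-s-leq-2} implies that for every pair of distinct big points $\gamma, \delta$ of $a$ sufficiently close to $0$, there exists $(r, s) \in K^2 \setminus \{(0, 0)\}$ with $\dim V(ra^{\vert\gamma} + sa^{\vert\delta}) \leq 2$. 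Hence it suffices to exhibit one $a$ with $\vj(a) = \omega^3$ such that for every pair of distinct big points $\gamma, \delta$ of $a$ and every $(r, s) \in K^2 \setminus \{(0, 0)\}$ one has $\dim V(ra^{\vert\gamma} + sa^{\vert\delta}) > 2$; its germ $a + J$ is then forced to be irreducible.

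I would build such an $a$ by two nested applications of \prettyref{lem:construct-series}. First fix a countable family $\{f_n\}_{n \in \mathbb{N}} \subset K((\mathbb{R}^{\leq 0}))$ of series of order type $\omega$ whose germs modulo $J + K$ are $K$-linearly independent; an explicit example is $f_n := \sum_{k \geq 1} t^{-\lambda_n / k}$ for any sequence $\{\lambda_n\} \subset \mathbb{R}^{> 0}$ linearly independent over $\mathbb{Q}$, since then the supports $\{-\lambda_n / k\}$ are disjoint and independence is immediate. Fix a bijection $\psi : \mathbb{N}^2 \to \mathbb{N}$. For each $i$ apply \prettyref{lem:construct-series} with $\alpha = 1$ to obtain a germ-like $a_i$ of order type $\omega^2$ whose big points $\{\delta_{i, k}\}_k$ are pairwise distinct across all $(i, k)$ and satisfy $a_i^{\vert\delta_{i, k}} \equiv f_{\psi(i, k)} \mod J$. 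Then apply \prettyref{lem:construct-series} once more, with $\alpha = 2$, to the sequence $(a_i)$ and any sequence $\gamma_i \nearrow 0$: this produces $a$ with $\vj(a) = \omega^3$, big points exactly the $\gamma_i$, and $a^{\vert\gamma_i} \equiv a_i \mod J$ for all $i$.

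The verification of the dimension bound is then a short direct computation. Fix $i \neq j$ and $(r, s) \neq (0, 0)$. By \prettyref{rem:V-germ-invariant}, $V(ra^{\vert\gamma_i} + sa^{\vert\gamma_j}) = V(ra_i + sa_j)$. Since the big points of $a_i$ and $a_j$ are disjoint, for every big point $\delta_{i, k}$ of $a_i$ (sufficiently close to $0$) the translated truncation $a_j^{\vert\delta_{i, k}}$ lies in $J$, because the support of $(a_j)_{\vert\delta_{i, k}}$ is bounded strictly below $\delta_{i, k}$ up to $J$. Hence $(ra_i + sa_j)^{\vert\delta_{i, k}} \equiv r\, f_{\psi(i, k)} \mod J$, and symmetrically $(ra_i + sa_j)^{\vert\delta_{j, k}} \equiv s\, f_{\psi(j, k)} \mod J$. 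Whether $r$, $s$, or both are non-zero, at least one of these two infinite families of germs sits inside $V(ra_i + sa_j)$, and the linear independence of $\{f_n + J + K\}$ then forces $\dim V(ra_i + sa_j) = \infty$, a fortiori $> 2$.

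The main source of friction, and the step requiring the most care, is the two-level bookkeeping for \prettyref{lem:construct-series}: I need the big points of the outer series $a$ to be precisely the prescribed $\gamma_i$ (rather than also the \emph{internal} points $\gamma_i + \delta_{i, k}$ which appear naively in the double sum $\sum_i t^{\gamma_i} \sum_k t^{\delta_{i, k}} f_{\psi(i, k)}$), and I need $a^{\vert\gamma_i} \equiv a_i \mod J$ so that the passage to $V$ commutes with the construction. Both facts follow from the support-separation device already built into the proof of \prettyref{lem:construct-series}, provided one truncates tails and spaces the $\gamma_i$'s appropriately at each level. Once this is in place, everything else is formal.
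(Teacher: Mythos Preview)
Your overall strategy—use the contrapositive of \prettyref{prop:dim-V-r-s-leq-2} and build $a$ via two nested applications of \prettyref{lem:construct-series}—matches the paper's, but the construction you chose does not work, and the error is in the last paragraph.

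You claim that the family $\{r f_{\psi(i,k)} + J + K : k \in \mathbb{N}\}$ sits inside $V(ra_i + sa_j)$. This is false, because $V$ is the \emph{intersection} $\bigcap_{\gamma<0} V_\gamma$, not a union. Fix $k_0$. The germ $r f_{\psi(i,k_0)}$ arises as $(ra_i+sa_j)^{\vert\delta_{i,k_0}}$ modulo $J+K$, and hence lies among the generators of $V_\gamma(ra_i+sa_j)$ only for $\gamma < \delta_{i,k_0}$. Once $\gamma \geq \delta_{i,k_0}$, the generators of $V_\gamma$ (modulo $J+K$) are exactly the vectors $r f_{\psi(i,k)}$ and $s f_{\psi(j,k)}$ with $\delta_{i,k} > \gamma$ or $\delta_{j,k} > \gamma$; since $\psi$ is a bijection, $f_{\psi(i,k_0)}$ is linearly independent from all of these, so $r f_{\psi(i,k_0)} \notin V_\gamma$. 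Thus $r f_{\psi(i,k_0)} \notin V(ra_i+sa_j)$. The same reasoning applied to an arbitrary finite linear combination shows that in fact $V(ra_i + sa_j) = \{0\}$ for your series, so \prettyref{prop:dim-V-r-s-leq-2} gives no obstruction at all.

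The paper avoids this by making the inner layer \emph{periodic} rather than injective: each $b_i$ is built so that $b_i^{\vert\gamma_{3j+k}} \equiv a_{3i+k} \bmod J$ for all $j$, i.e.\ the same three germs $a_{3i}, a_{3i+1}, a_{3i+2}$ recur infinitely often as the big point approaches $0$. Then $(rb_i+sb_j)^{\vert\gamma_{3h+k}} \equiv r a_{3i+k} + s a_{3j+k} \bmod J$ for every $h$, so each of the three vectors $r a_{3i+k} + s a_{3j+k}$ ($k=0,1,2$) lies in $V_\gamma$ for every $\gamma$, hence in the intersection $V$. Linear independence of the six $a_n$'s involved then gives $\dim V \geq 3$. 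Your bijection $\psi$ destroys exactly this recurrence; replacing it by a periodic assignment (three germs per $i$, repeated) fixes the argument.
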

\begin{proof}
  Let $\Omega$ be any countable set of series of order-value $\omega$
  with pairwise disjoint supports. Clearly, $\Omega$ is a $K$-linearly
  independent set. Moreover, it is $K$-linearly independent even
  modulo the vector space $J+K$.  Let $a_{i}$, for $i \in \mathbb{N}$,
  be some enumeration of $\Omega$, and take a strictly increasing
  sequence $(\gamma_i \in \mathbb{R}^{< 0} \,:\, i \in\mathbb{N})$
  such that $\lim_{i \to \infty} \gamma_i = 0$.

  By \prettyref{lem:construct-series}, there are series $b_i$ of
  order-value $\omega^2$ such that
  \[
    b_i^{\vert\gamma_{3j + k}} \equiv a_{3i + k} \mod J.
  \]
  for all $i, j \in \mathbb{N}$ and $k \in \{0, 1, 2\}$. By
  \prettyref{lem:construct-series} again, there exists $c$ of
  order-value $\omega^3$ such that
  \[
    c^{\vert\gamma_i} \equiv b_i \mod J
  \]
  and whose big points are the $\gamma_{i}$'s.

  Take any $r, s \in K$ not both zero and any two distinct
  $\gamma_{i}, \gamma_{j}$. By construction,
  $rc^{\vert\gamma_{i}} + sc^{\vert\gamma_{j}} = rb_{i} +
  sb_{j}$. Note moreover that for any $\gamma_{3h+k}$,
  \[
    (rb_{i} + sb_{j})^{\vert\gamma_{3h+k}} =
    rb_{i}^{\vert\gamma_{3h+k}} + sb_{j}^{\vert\gamma_{3h+k}} \equiv
    ra_{3i+k} + sa_{3j+k} \mod J.
  \]
  It follows at once that
  \begin{multline*}
    V(rc^{\vert\gamma_i} + sc^{\vert\gamma_j}) = V(rb_{i} + sb_{j}) \supseteq \\
    \{ra_{3i} + sa_{3j} + J + K, ra_{3i + 1} + sa_{3j + 1} + J + K,
    ra_{3i + 2} + sa_{3j + 2} + J + K\}.
  \end{multline*}
  By elementary linear algebra, it follows that for all $i \neq j$ we
  have
  \[
    \dim(V(rc^{\vert\gamma_i} + sc^{\vert\gamma_j})) \geq 3.
  \]
  By \prettyref{prop:dim-V-r-s-leq-2}, it follows that $c + J$ is
  irreducible.
\end{proof}

\begin{thm}\label{thm:irred-series-w-3}
  There exist irreducible series in $K((\mathbb{R}^{\leq 0}))$ of
  order-value $\omega^3$.
\end{thm}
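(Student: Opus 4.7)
The approach is to apply \prettyref{prop:germ-like-irred}, which lifts irreducibility from a germ to a germ-like representative: it suffices to exhibit a germ-like series $c \in K((\mathbb{R}^{\leq 0}))$ of order-value $\omega^3$ whose germ $c + J$ is irreducible. The construction of \prettyref{thm:irred-germ-w-3} already produces such an irreducible germ; the remaining task is to choose the representative $c$ so that, in addition to $\vj(c) = \omega^3$, one has $\ot(c) = \omega^3$.

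The first step is to strengthen \prettyref{lem:construct-series} as follows: if the input series $a_i$ have order type exactly $\omega^\alpha$, then for a suitable choice of the sequence $(\gamma_i)$ the output $a = \sum_i t^{\gamma_i} a_i$ has order type exactly $\omega^{\alpha+1}$. Indeed, it suffices to pick the gaps $\gamma_{i+1} - \gamma_i$ so that $S_{a_{i+1}} > \gamma_i - \gamma_{i+1}$ holds from the outset, as this obviates the truncation step at the start of the proof of the lemma; the supports of the $t^{\gamma_i} a_i$ are then pairwise disjoint and strictly increasing (each block $t^{\gamma_i} a_i$ sitting strictly below the next), yielding $\ot(a) = \omega^\alpha \cdot \omega = \omega^{\alpha+1}$.

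I would then redo the construction of \prettyref{thm:irred-germ-w-3} starting from a countable family $\Omega$ of series of order type exactly $\omega$ (for instance, translates of $\sum_{n \geq 1} t^{-1/n}$ placed in pairwise disjoint intervals of $\mathbb{R}^{<0}$). Two applications of the strengthened lemma produce first a sequence $b_i$ of order type $\omega^2$ and then a single $c$ of order type $\omega^3$, hence a germ-like series. The rest of the proof of \prettyref{thm:irred-germ-w-3} depends only on the germ-theoretic data of $c$, namely the translated truncations $c^{\vert\gamma_i}$ modulo $J$ and the associated spaces $V(\cdot)$, so it still shows that $c + J$ is irreducible; \prettyref{prop:germ-like-irred} then upgrades this to irreducibility of $c$ itself in $K((\mathbb{R}^{\leq 0}))$.

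The only delicate point is the combinatorial bookkeeping required to simultaneously control the order types, the supports, and the positions of the big points across the two iterations of the strengthened lemma. This is a routine ordinal-arithmetic verification and introduces no new ideas beyond those of the preceding theorem.
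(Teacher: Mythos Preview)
Your approach is correct, but the paper takes a much shorter route that bypasses the bookkeeping entirely. Instead of re-engineering \prettyref{lem:construct-series} and threading order-type control through two nested applications, the paper simply takes \emph{any} series $c$ produced by \prettyref{thm:irred-germ-w-3} and replaces it by $c - c_{\vert\gamma}$ for a suitable $\gamma < 0$. Since $c_{\vert\gamma} \in J$, the germ is unchanged, while chopping off the initial segment forces the remaining support to have order type $\omega^3$ (or $\omega^3 + 1$), so the new representative is germ-like; then \prettyref{prop:germ-like-irred} finishes as in your argument. The point you missed is that \emph{every} series of order-value $\omega^3$ becomes germ-like after a single truncation, so there is no need to build a germ-like representative from scratch. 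Your strengthened lemma is a valid alternative and would be the natural move if one needed finer control over the big points or the supports later on, but for the bare existence statement it is overkill.
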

\begin{proof}
  By \prettyref{thm:irred-germ-w-3}, there exist series $c$ of
  order-value $\omega^3$ such that $c + J$ is irreducible. Up to
  replacing $c$ with $c - c_{\vert\gamma}$ for a suitable
  $\gamma \in \mathbb{R}^{< 0}$, we may directly assume that $c$ is
  germ-like. By \prettyref{prop:germ-like-irred}, $c$ is irreducible.
\end{proof}

\bibliographystyle{plainurl}

\end{document}